\title{Local moduli of holomorphic bundles}
\author{E.\ Ballico \and E.\ Gasparim \and T.\ K\"{o}ppe}
\date{}
\numberwithin{equation}{section}
\newtheorem{theorem}{Theorem}[section]
\newtheorem{lemma}[theorem]{Lemma}
\newtheorem{corollary}[theorem]{Corollary}
\newtheorem{proposition}[theorem]{Proposition}
\newtheorem*{thm.gammaEfilt}{Theorem \ref{thm.gammaEfilt}}
\newtheorem*{thm.gamma}{Theorem \ref{thm.gamma}}
\newtheorem*{thm.filt-alg}{Theorem \ref{thm.filt-alg}}
\theoremstyle{remark}
\theoremstyle{definition}
\newtheorem{remark}[theorem]{Remark}
\newtheorem{defn}[theorem]{Definition}
\newcommand{\ce}{\mathrel{\mathop:}=}
\newcommand{\abs}[1]{\left\lvert#1\right\rvert}
\newcommand{\op}{\mathcal{O}_{\mathbb{P}^1}}
\DeclareMathOperator{\Ext}{Ext}
\DeclareMathOperator{\Pic}{Pic}
\DeclareMathOperator{\Tot}{Tot}
\DeclareMathOperator{\vdeg}{\mathbf{deg}}
\DeclareMathOperator{\ad}{ad}
\DeclareMathOperator{\rk}{rk}
\DeclareMathOperator{\SHom}{\mathscr{H}{\!\mathit{om}}}
\DeclareMathOperator{\SEnd}{\mathscr{E}{\!\mathit{nd}}}
\newcounter{dummy}
\begin{document}

\maketitle

\refstepcounter{dummy}\addcontentsline{toc}{section}{Abstract}\begin{abstract}\noindent
We study moduli of holomorphic vector bundles on non-compact
varieties. We discuss filtrability and algebraicity of bundles and
calculate dimensions of local moduli. As particularly interesting
examples, we describe numerical invariants of bundles on some local
Calabi-Yau threefolds.
\end{abstract}


\section{Introduction}

This paper is the second part of a long-term project to study bundles
on threefolds, their moduli and how they change under birational
transformations of the base. An essential component within this plan
is the study of the local situation, that is, holomorphic bundles over
open threefolds such as a neighbourhood of a reduced local complete
intersection $Z$ inside a smooth threefold $W$. The case when $Z$ is a
ruled surface was studied in \cite{bg2}. In this paper we focus on the
case when $Z$ is a curve.

For the local situation, one has to make some delicate choices:
algebraic versus holomorphic bundles; a small analytic neighbourhood
versus a Zariski neighbourhood versus a formal neighbourhood of $Z$ in
$W$. Our choice for this paper is the nicest possible situation,
namely when the conormal bundle of $Z$ inside $W$ is ample. By
``nice'' we mean that in this case the aforementioned choices all
amount to the same results (Theorem \ref{thm.filt-alg} together with
the formal principle \ref{thm.exist}); moreover, filtrability implies
that we can study moduli in a very concrete fashion: by considering
$\Ext$-spaces modulo bundle isomorphisms, which immediately produce
quotient stacks, but \emph{a priori} no moduli spaces. In fact, given
the absence of a notion of stability for bundles on non-compact
manifolds, there is no preferred way to obtain moduli spaces out of
these quotients, and we are led to study stratifications via dimensions
and numerical invariants.

Here we discuss dimension of local deformation spaces and provide
concrete examples of bundles on some Calabi-Yau threefolds. The
threefold situation contrasts totally with the two-dimensional
situation, where the components of the local holomorphic Euler
characteristic provided the required stratifications of the local
moduli (cf.\ \cite[Theorem 4.1]{bg1} and \cite[Theorem 4.15]{bgk1}).
Our examples in this paper show that the local holomorphic Euler
characteristic is not a satisfactory invariant for the case of a curve
inside a threefold. In particular, the invariant $w(E) =
h^0\bigl((\pi_* E)^{\vee\vee} \bigl/ \pi_* E\bigr)$, which provided
meaningful geometric information in the case of an exceptional curve
$Z$ in a surface (where $\pi$ is the contraction of $Z$),
vanishes identically for exceptional curves in $n$-folds for $n \geq
3$ (Lemma \ref{lem.nowidth}). We introduce the notion of partial
invariants and tabulate some examples.

\section{Results}

Let $W$ be a connected complex manifold (or smooth algebraic variety)
and $Z$ a curve contained in $W$ that is reduced, connected and
a local complete intersection. Let $\widehat Z$ denote the formal
completion of $Z$ in $W$. Ampleness of the conormal bundle 
(see Definition \ref{def.ample}) has a strong influence on the 
behaviour of bundles on $\widehat Z$. We show:

\begin{thm.filt-alg}
If the conormal bundle $N_{Z,W}^*$ is ample, then every vector bundle
on $\widehat Z$ is filtrable. If in addition $Z$ is smooth, then every
holomorphic bundle on $\widehat Z$ is algebraic.
\end{thm.filt-alg}

It then follows from Peternell's Existence Theorem (Theorem
\ref{thm.exist}) that the same results hold true for bundles on an
analytic neighbourhood of $Z$ in $W$. Alternatively, one can work with
the more concrete cases where $W$ is the total space of a vector
bundle over $Z$; we consider such spaces with $Z \cong \mathbb{P}^1$
in Sections \ref{sec.P1} and \ref{sec.localcy}. It turns out that when
$Z \cong \mathbb{P}^1$ and $N_{Z,W}^*$ is ample, then the dimension
$\gamma(E)$ of the local deformation space of a bundle $E$ on
$\widehat Z$ depends \emph{only} on the restriction of $E$ to $Z$. We
show:

\begin{thm.gamma}
Let $W = \Tot\bigl(\op(-1)^{\oplus n} \bigr)$ and $Z$ the zero
section. Let $E$ be a vector bundle on $W$ (or on $\widehat Z$) such
that $E\rvert_Z$ has splitting type $a_1 \geq \dotsb \geq a_r$. Then
the dimension of the local deformation space at $E$ is
\[ \gamma(E) = \sum_{a_i-a_j>1} \sum _{t=0}^{a_i-a_j-2} (a_i-a_j-1-t) \cdot \binom{t+n-1}{t} \text{ .} \]
\end{thm.gamma}

However, for curves other than $\mathbb{P}^1$ a similar statement is
very far from the truth, that is, the restriction to $Z$ does not
determine the dimension of the deformation space. So the question of
local moduli becomes more complicated as well as more interesting.

In Section \ref{sec.chainsp1} we consider the case of chains of
transversally intersecting $\mathbb{P}^1$'s. We give some numerical
conditions on a bundle $E$ under which the dimension of the
deformation space depends only on the splitting type (Propositions
\ref{a81} and \ref{a82}); but these are very restrictive conditions
relating the splitting type of $E$ and the degree of the conormal
bundle, seldom satisfied, but nevertheless also satisfied by certain
non-split bundles.

For the case of curves of positive genus, the situation is naturally
more complicated. Dimension of the local deformation space of a split
bundle is given by an exact formula (Theorem \ref{thm.gammaEsplit}),
but in full generality, even for bundles whose filtrations have fixed
degrees the dimension of the local moduli still varies.  Note that for
a curve $Z$ of positive genus, the isomorphism type of the normal
bundle $N_{Z,W}$ is no longer determined by the degree of a
filtration, and the dimension of the local deformation space
$\gamma(E)$ of a bundle $E$ on $\widehat Z$ depends both on $F =
E\rvert_Z$ and $N_{Z,W}$. Yet not all hope is lost, and it is still
possible to give an upper bound for $\gamma(E)$, at least in the case
when $N_{Z,W}$ is a general element of the moduli space $M(Z;r,d)$ of
stable vector bundles on $Z$ of rank $r$ and degree $d$. For
$E\rvert_Z = F$ we write
\[ \gamma(F,N) \ce \sum_{t \geq 0} \gamma(F,N,t) \text{ , where \ }
   \gamma(F, N, t) \ce h^1\bigl(Z; \SEnd(F) \otimes S^t(N^*)\bigr) \text{ ,} \]
and show:

\begin{thm.gammaEfilt}
Let $Z$ be a curve of genus $g \geq 2$. Fix a rank-$r$ vector bundle
$F$ on $Z$ and any increasing filtration $\{F_i\}_{i=0}^{r}$ of $F$
such that $F_r = F$, $F_0 = \{0\}$ and each $F_i\bigl/F_{i-1}$ is a
line bundle. Set $a_i \ce \deg(F_i\bigl/F_{i-1})$, $1 \leq i \leq
r$. Assume that $N_{Z,W}$ is a general element of $M(Z; n-1, d)$. Then
for any bundle $E$ on $\widehat Z$ with $E\rvert_Z = F$,
\begin{equation*}
  \gamma(F, N_{Z,W}, t) \leq \sum_{i=1}^{r} \sum _{j=1}^{r} 
  \max \Bigl\{0, \ t\bigl(d + 2g - 2 + a_i - a_j\bigr) \bigl/ r
  + 1 - g\Bigr\} \cdot \binom{t + r - 1}{t}
\end{equation*}
for all $t \geq 1$.
\end{thm.gammaEfilt}

We also point out a rigidity behaviour when $g=1$ (Theorem \ref{g=1});
namely, when $Z$ has genus $1$, if both $E\rvert_Z$ and $N_{Z,W}$
are semi-stable, then $\gamma(E\rvert_Z, N_{Z,W}, t) = 0 $ for all $t>0$.

In Section \ref{sec.localcy} we discuss concrete examples of 
Calabi-Yau threefolds and provide a table of numerical
invariants.

\section{Filtrability and Algebraicity}

Let $W$ be a connected, complex manifold (or smooth algebraic variety)
and $Z \subset W$ reduced, connected and a local complete
intersection in $W$. Let $\mathcal{I}$ be the ideal sheaf of $Z$, then
the conormal sheaf $\mathcal{I}_{Z,W} \bigl/ \mathcal{I}_{Z,W}^2$ is
locally free, and we write $N_{Z,W}$ for its dual. We will assume that
the conormal bundle $N_{Z,W}^*$ is ample. For all integers $m \geq 0$
let $Z^{(m)}$ denote the $m^\text{th}$ infinitesimal neighbourhood of
$Z$ in $W$, i.e.\ the closed analytic subspace of $W$ with
${\mathcal{I}_Z}^{m+1}$ as its ideal sheaf. Thus $Z^{(0)} = Z$. Let
$\widehat Z \ce \varprojlim_m Z^{(m)}$ denote the formal completion of
$Z$ in $W$.

In this section we assume that $Z$ is a curve and that the conormal
bundle $N_{Z,W}^*$ is ample. We recall the definition of ampleness for
a general vector bundle (a.k.a.\ locally free sheaf of finite rank)
according to \cite[p.\ 321]{har_avb}:

\begin{defn}\label{def.ample}
Let $\bigl(X, \mathcal{O}_X\bigr)$ be a complex space and
$\mathcal{E}$ a locally free $\mathcal{O}_X$-module of finite rank
(i.e.\ the sheaf of sections of a holomorphic vector bundle $E$). Then
$\mathcal{E}$ (or $E$) is called \emph{ample} if for every coherent
sheaf $\mathcal{F}$ on $X$ there exists an integer $n$ such that
$\mathcal{G} \ce \mathcal{F} \otimes_{\mathcal{O}_X} S^n(\mathcal{E})$
is generated as an $\mathcal{O}_X$-module by its global sections.

[Recall that this means that there exist sections $s_1, \dotsc, s_p
\in \Gamma(X, \mathcal{G})$ such that for every open $U \subseteq X$,
$\mathcal{G}(U)$ is generated by $\{s_1\rvert_{U}, \dotsc,
s_p\rvert_{U}\}$ as an $\mathcal{O}_X(U)$-module. This is equivalent
by \cite[Proposition 2.1]{har_avb} to every stalk $\mathcal{G}_x$
being generated by $\{s_{1,x}, \dotsc, s_{p,x}\}$ as an
$\mathcal{O}_{X,x}$-module.]

We will also use the term \emph{spanned} for ``generated by global
sections'', which is equivalent on complex spaces.\footnote{The notion
of a \emph{spanned} vector bundle is more general on spaces over
non-algebraically closed fields.}
\end{defn}

The aim of this section is to prove:

\begin{theorem}\label{thm.filt-alg}
If the conormal bundle $N_{Z,W}^*$ is ample, then every vector bundle
on $\widehat Z$ is filtrable. If in addition $Z$ is smooth, then every
holomorphic bundle on $\widehat Z$ is algebraic.
\end{theorem}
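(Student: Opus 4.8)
The plan is to prove filtrability by induction on the rank $r$ of the bundle $E$ on $\widehat Z$, the inductive step being the construction of a subbundle line bundle $L \subset E$ whose quotient $E/L$ is again a vector bundle (the rank-one case being trivial, so that one may then induct on $E/L$). The backbone is the tower of square-zero extensions $0 \to S^m(N_{Z,W}^*) \to \mathcal{O}_{Z^{(m)}} \to \mathcal{O}_{Z^{(m-1)}} \to 0$, valid for $m \geq 1$ precisely because $Z$ is a local complete intersection, so that $\mathcal{I}^m\bigl/\mathcal{I}^{m+1} \cong S^m(N_{Z,W}^*)$ is locally free. Restricting $E$ yields compatible bundles $E^{(m)}$ on $Z^{(m)}$ with $E = \varprojlim_m E^{(m)}$, so the first task is to build the sub-line-bundle one neighbourhood at a time and then pass to the inverse limit.

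First I would produce a saturated sub-line-bundle $L^{(0)} \subset F \ce E\rvert_Z$ of very negative degree: for $\deg L^{(0)} \ll 0$ the bundle $F \otimes (L^{(0)})^{-1}$ is globally generated, and since $Z$ is a curve and $\rk F \geq 2$ a general section vanishes nowhere, producing a surjection $F^\vee \twoheadrightarrow (L^{(0)})^{-1}$, equivalently a subbundle $L^{(0)} \hookrightarrow F$ with locally free quotient. Lifting $L^{(0)}$ successively through the tower amounts to lifting the quotient $E^{(m-1)} \twoheadrightarrow Q^{(m-1)}$ across the extension above; the obstruction theory of this relative Quot problem places the obstruction in $H^1\bigl(Z, (L^{(0)})^{-1} \otimes (F/L^{(0)}) \otimes S^m(N_{Z,W}^*)\bigr)$, with the set of lifts, when non-empty, a torsor under the corresponding $H^0$. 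Here ampleness enters decisively: since $N_{Z,W}^*$ is ample, every $S^m(N_{Z,W}^*)$ with $m \geq 1$ is ample, and choosing $\deg L^{(0)}$ sufficiently negative makes $(L^{(0)})^{-1} \otimes (F/L^{(0)}) \otimes S^m(N_{Z,W}^*)$ positive enough that all these $H^1$ vanish \emph{simultaneously} for every $m \geq 1$. Thus lifts exist at each stage and assemble to $L = \varprojlim_m L^{(m)} \subset E$; the quotient $E/L$ has rank $r-1$, and filtrability follows by induction.

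For the second statement I would use the filtration just constructed to reduce algebraicity to comparisons between the analytic and algebraic categories. Each infinitesimal neighbourhood $Z^{(m)}$ is a proper complex space supported on the compact curve $Z$, so GAGA identifies holomorphic and algebraic coherent sheaves on it and on $Z$; since $Z$ is \emph{smooth} this comparison is clean and $\Pic(Z)$ is algebraic. The line bundles occurring as graded pieces are algebraic because, $Z$ being a curve, $H^2(Z, S^m(N_{Z,W}^*)) = 0$, so the tower $\Pic(Z^{(m)}) \to \Pic(Z^{(m-1)})$ is unobstructed and its kernels $H^1(Z, S^m(N_{Z,W}^*))$ agree analytically and algebraically; hence $\Pic(\widehat Z)$ is computed algebraically. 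Likewise the extension classes gluing successive graded pieces lie in $\Ext^1$-groups over $\widehat Z$ that agree analytically and algebraically level by level, so each $E_i$, and therefore $E$, is algebraic.

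The step I expect to be the main obstacle is the \emph{uniform} vanishing in the filtrability argument: a single choice of $L^{(0)}$ must annihilate the obstruction at every level $m \geq 1$ at once, which requires controlling how the positivity of $S^m(N_{Z,W}^*)$ grows with $m$ against the fixed negativity of $L^{(0)}$, and carrying this out over a possibly \emph{singular} reduced curve $Z$, where one must argue through the ampleness of $N_{Z,W}^*$ in the sense of Definition \ref{def.ample} rather than through slopes. The algebraicity part is comparatively soft once filtrability is in hand, the only delicate point being to phrase the analytic-to-algebraic comparison of the $\Pic$- and $\Ext$-towers so that it passes to the inverse limit.
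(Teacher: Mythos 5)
Your proposal is correct and follows essentially the same route as the paper's (Lemmas \ref{lem.a1}--\ref{lem.extension2} and Theorem \ref{thm.alg}): a sufficiently negative sub-line bundle of $E\rvert_Z$ is propagated up the tower $0\to S^m(N^*)\to\mathcal{O}_{Z^{(m)}}\to\mathcal{O}_{Z^{(m-1)}}\to 0$ by using ampleness of $N_{Z,W}^*$ to kill the $H^1$-obstructions on the one-dimensional $Z$, followed by induction on the rank --- the paper merely packages this as extending a nowhere-vanishing section of $E\otimes H^{\otimes t}$ from $Z^{(m)}$ to $\widehat Z$ rather than as your Quot-type lifting of the quotient, and it sidesteps your ``uniform vanishing'' worry by starting the lift at a level $m$ beyond which every $S^n(N^*)$ is spanned. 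The algebraicity step also matches: finite-dimensionality of $\Ext^1$ forces each extension to be determined on a finite infinitesimal neighbourhood, which is algebraic.
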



\begin{lemma}\label{lem.a1}
Assume that $Z \subset W$ is a smooth curve and that $N_{Z,W}^*$ is
ample. Then there exists $H \in \Pic \widehat Z$ such that
$H\rvert_{Z^{(n)}}$ is ample for all $n$.
\end{lemma}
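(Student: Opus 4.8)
The plan is to reduce the problem to the reduced curve $Z$ and then propagate an ample class up the tower of thickenings. The key observation is that whether a line bundle on the infinitesimal neighbourhood $Z^{(n)}$ is ample depends only on its restriction to the reduction $(Z^{(n)})_{\mathrm{red}} = Z$. Granting this, it suffices to produce a single $H \in \Pic \widehat Z$ whose restriction $H\rvert_Z$ is ample on the curve $Z$; its restrictions $H\rvert_{Z^{(n)}}$ will then be ample for every $n$. Since $Z$ is a smooth projective curve, any line bundle of positive degree on $Z$ is ample, so the task becomes one of lifting such a class through the system $\dotsb \to Z^{(n)} \to Z^{(n-1)} \to \dotsb \to Z$.

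First I would fix a line bundle $H_0 \in \Pic Z$ of positive degree and lift it inductively. For each $n \geq 1$ the truncated exponential sequence
\[ 0 \to S^n N^*_{Z,W} \to \mathcal{O}^*_{Z^{(n)}} \to \mathcal{O}^*_{Z^{(n-1)}} \to 0 \text{ ,} \]
in which $S^n N^*_{Z,W} \cong \mathcal{I}^n/\mathcal{I}^{n+1}$ is embedded as the units $1 + (\cdot)$, yields on cohomology the exact sequence $\Pic Z^{(n)} \to \Pic Z^{(n-1)} \to H^2\bigl(Z, S^n N^*_{Z,W}\bigr)$. Because $Z$ is a curve, $H^2\bigl(Z, S^n N^*_{Z,W}\bigr) = 0$, so each restriction map $\Pic Z^{(n)} \to \Pic Z^{(n-1)}$ is surjective. (This is the only place the hypotheses enter, and only through $\dim Z = 1$; ampleness of $N^*_{Z,W}$ itself is not needed for the lifting.) I would therefore choose inductively a compatible system $H_n \in \Pic Z^{(n)}$ with $H_n\rvert_{Z^{(n-1)}} = H_{n-1}$; this system assembles into a line bundle $H \in \Pic \widehat Z$ with $H\rvert_{Z^{(n)}} = H_n$, and in particular $H\rvert_Z = H_0$.

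It remains to justify the reduction of ampleness to $Z$, which I expect to be the main obstacle, especially since Definition \ref{def.ample} is the sheaf-theoretic (Serre-type) notion and we also work in the analytic category. Here I would argue directly: given a coherent sheaf $\mathcal{F}$ on $Z^{(n)}$, filter it by the subsheaves $\mathcal{I}^k\mathcal{F}$, whose successive quotients are coherent sheaves supported on the compact curve $Z$. Since $H_0 = H\rvert_Z$ is ample, Serre vanishing on $Z$ gives an integer $m$ for which each graded quotient, twisted by $H^{\otimes m}$, is globally generated with vanishing $H^1$; chasing the resulting short exact sequences then lifts global generation step by step and shows that $\mathcal{F} \otimes H_n^{\otimes m}$ is itself globally generated. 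Hence $H_n = H\rvert_{Z^{(n)}}$ is ample for every $n$, as required. The delicate points to verify carefully are the availability of Serre vanishing and global generation for coherent sheaves on the compact analytic curve $Z$, and the lifting of sections across the finitely many steps of the filtration.
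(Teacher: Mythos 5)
Your proposal is correct and follows essentially the same route as the paper: the paper's (one-line) proof likewise lifts an ample class up the tower using surjectivity of the restriction maps $\Pic Z^{(m)} \to \Pic Z^{(n)}$, which holds because $H^2$ of the sheaves $S^i(N^*)$ vanishes on the one-dimensional $Z$. Your additional filtration argument justifying that ampleness on $Z^{(n)}$ is detected on the reduction $Z$ is a correct elaboration of a step the paper leaves implicit.
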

\begin{proof}
For each $m > n \geq 0$ the restriction map $\rho_{m,n} \colon \Pic
Z^{(m)} \to \Pic Z^{(n)}$ is surjective because $H^2\bigl(\widehat Z; S^i(N^*)\bigr) = 0$
for all $i$; the result follows from the definition of $\Pic \widehat Z$.
\end{proof}

\begin{lemma}[Filtrability]\label{lem.extension2}
Assume that $Z \subset W$ is a smooth curve and that $N_{Z,W}^*$ is
ample. Fix any integer $r \geq 2$ and any rank-$r$ vector bundle $E$
on $\widehat Z$. Then there exists an increasing filtration $E_1 \subset
\dotsb \subset E_{r-1} \subset E_r \ce E$ of $E$ by subbundles such
that $E_1 \in \Pic \widehat Z$ and $E_i \bigl/ E_{i-1} \in \Pic
\widehat Z$ for all $2 \leq i \leq r$.
\end{lemma}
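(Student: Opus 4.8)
The plan is to prove filtrability of a rank-$r$ bundle $E$ on $\widehat Z$ by finding a line subbundle, i.e. a subsheaf $E_1 \hookrightarrow E$ with $E_1 \in \Pic\widehat Z$ such that the quotient $E/E_1$ is again a vector bundle (locally free) on $\widehat Z$, and then inducting on the rank. The key mechanism is to produce a nowhere-vanishing section of a suitable twist. Using Lemma~\ref{lem.a1}, fix $H \in \Pic\widehat Z$ with $H\rvert_{Z^{(n)}}$ ample for all $n$. The idea is that for a sufficiently positive twist $E \otimes H^{\otimes k}$, ampleness will force enough global sections to exist that a generic one gives a subbundle inclusion; dually, one can look for a line-bundle quotient, which is often more natural since surjections of bundles automatically have locally free kernel on a smooth base.

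First I would set up the argument on each finite thickening $Z^{(n)}$ and then pass to the limit. Concretely, I would choose $L \in \Pic\widehat Z$ (a power of $H$, or a line bundle built from $H$) positive enough that $E \otimes L$ is spanned and has, generically, a section whose zero scheme is empty after restriction to $Z = Z^{(0)}$; here the ampleness of $N^*_{Z,W}$ is what controls the higher cohomology $H^1\bigl(\widehat Z; \SEnd(E)\otimes S^t(N^*)\bigr)$-type obstruction groups and guarantees surjectivity of the restriction maps $H^0(Z^{(m)}; \cdot) \to H^0(Z^{(n)}; \cdot)$ for $m>n$. The section over $Z$ that is nowhere zero extends, by this surjectivity and the vanishing of the relevant $H^1$, to a compatible system of sections over all $Z^{(n)}$, hence to a section over $\widehat Z$; because being nowhere-vanishing is detected on the reduced curve $Z$ (the thickenings share the same topological support), the extended section remains a subbundle inclusion $L^{-1} \hookrightarrow E$ with locally free quotient. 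Setting $E_1 \ce L^{-1}$ (or its image) gives the first step of the filtration.

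With $E_1 \subset E$ a line subbundle and $E/E_1$ locally free of rank $r-1$ on $\widehat Z$, I would complete the proof by induction on $r$: the conormal bundle $N^*_{Z,W}$ is unchanged, so the hypotheses of the lemma apply verbatim to $E/E_1$, yielding a filtration of the quotient whose successive graded pieces are line bundles in $\Pic\widehat Z$. Pulling this filtration back along $E \twoheadrightarrow E/E_1$ and prepending $E_1$ produces the desired filtration $E_1 \subset \dotsb \subset E_{r-1} \subset E_r = E$ with each $E_i/E_{i-1} \in \Pic\widehat Z$ and $E_1 \in \Pic\widehat Z$. The base case $r=1$ is trivial.

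The main obstacle I expect is the existence of the nowhere-vanishing section on $Z$ itself together with its extension to the formal neighbourhood. On a curve $Z$, a generic global section of a spanned, sufficiently positive bundle $E\otimes L$ of rank $r \geq 2$ will indeed be nowhere zero (the locus where a generic section vanishes has codimension $r \geq 2$ in a $1$-dimensional variety, hence is empty), so the geometric input on $Z$ is manageable; the real work is ensuring that the twist $L$ can be chosen uniformly and that the obstruction groups governing the lift from $Z^{(n)}$ to $Z^{(n+1)}$ — which are cohomology groups of $E$ twisted by symmetric powers $S^t(N^*)$ of the ample conormal bundle — all vanish, so that the section extends coherently over the whole formal completion. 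This uniform vanishing, and the compatibility of the resulting inverse system of sections, is precisely where ampleness of $N^*_{Z,W}$ does the essential heavy lifting.
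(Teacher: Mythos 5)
Your proposal follows essentially the same route as the paper's proof: twist by a positive line bundle coming from Lemma \ref{lem.a1} so that the twisted bundle is spanned, use $r \geq 2 > \dim Z$ to get a nowhere-vanishing section of a spanned bundle (hence a line subbundle with locally free cokernel, non-vanishing being detected on the common support $Z$), extend the section over $\widehat Z$ via surjectivity of restriction maps and vanishing of the $H^1$ obstruction groups twisted by $S^t(N^*_{Z,W})$, and induct on the rank via $E/E_1$. The only cosmetic difference is that the paper produces the nowhere-vanishing section on a fixed finite thickening $Z^{(m)}$, with $m$ chosen so that $S^n(N^*_{Z,W})$ is spanned for all $n \geq m$, before lifting to $\widehat Z$, rather than starting from $Z$ itself.
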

\begin{proof}
Fix $H \in \Pic \widehat Z$ as in Lemma \ref{lem.a1}. Since
$N_{Z,W}^*$ is ample, there is an integer $m$ such that
$S^n(N_{Z,W}^*)$ is spanned (i.e.\ generated by global sections) for
all integers $n \geq m$. Since $H\rvert_{Z^{(m)}}$ is ample, there is
an integer $t \geq 0$ such that $H^1\bigl(Z^{(m)};
E(tH)\rvert_{Z^{(m)}}\bigr) = 0$ and $E(tH)\rvert_{Z^{(m)}}$ is
spanned. Since $\dim Z^{(m)} = 1$, $h^2\bigl(Z^{(m)};
\mathcal{F}\bigr) = 0$ for every coherent sheaf $\mathcal{F}$ on
$Z^{(m)}$ and since $S^n(N_{Z,W}^*)$ is spanned for all integers $n
\geq m$, we get
\[ H^1\Bigl(Z^{(m)}; \bigl(E(tH)\rvert_{Z^{(m)}}\bigr) \otimes S^n(N_{Z,W}^*) \Bigr) = 0 \quad\text{for all \ } n \geq m \]
and that $\bigl(E(tH)\rvert_{Z^{(m)}}\bigr) \otimes S^n(N_{Z,W}^*)$ is spanned
for all $n \geq m$. Using the exact sequences
\begin{equation*}
  0 \longrightarrow S^n(N^*) \longrightarrow \mathcal{O}_Z^{(n)}
  \longrightarrow \mathcal{O}_Z^{(n-1)} \longrightarrow 0 \text{ ,}
\end{equation*}
we get that $E(tH)\rvert_{Z^{(n)}}$ is spanned for all $n \geq m$ and
also that the restriction map
\[ \eta \colon H^0\bigl(\widehat Z; E(tH)\bigr) \to H^0\bigl(Z^{(m)}; E(tH)\rvert_{Z^{(m)}}\bigr) \]
is surjective. Since $\dim Z^{(m)} = 1$ and $r \geq 2$, there is a
nowhere vanishing section
\[ \sigma \in H^0\bigl(Z^{(m)}; E(tH)\rvert_{Z^{(m)}}\bigr) \text{ ,} \]
i.e.\ an inclusion $(H^{\otimes t})^\vee\rvert_{Z^{(m)}} \to
E\rvert_{Z^{(m)}}$ with locally free cokernel. Take $\alpha \in
H^0\bigl(\widehat Z; E(tH)\bigr)$ such that $\eta(\alpha) =
\sigma$. Since $Z$ is the support of both $\widehat Z$ and $Z^{(m)}$,
$\eta$ induces an inclusion $j \colon (H^{\otimes t})^\vee \to E$ with
locally free cokernel. Set $E_1 \ce j\bigl((H^{\otimes
t})^\vee\bigr)$. If $r=2$, then we are done. If $r \geq 3$ we apply
induction on $r$ to the rank-$(r-1)$ vector bundle $E \bigl/ E_1$.
\end{proof}

\begin{theorem}[Algebraicity]\label{thm.alg}
Assume that $Z \subset W$ is a smooth curve and that $N_{Z,W}^*$ is
ample. Then holomorphic vector bundles on $\widehat Z$ are algebraic.
\end{theorem}
\begin{proof}
By Lemma \ref{lem.extension2}, vector bundles on $\widehat Z$ are extensions
\[ 0 \longrightarrow L_{a_r} \longrightarrow E \longrightarrow F_{r-1} \longrightarrow 0 \]
and are therefore classified by $\Ext^1\bigl(F_{r-1}, L_{a_r}\bigr) =
H^1\bigl(F_{r-1} \otimes L^{-1}_{a_r}\bigr)$, which is
finite-di\-men\-sio\-nal. Furthermore, each such extension (and hence each
such $E$) is uniquely determined from some finite, infinitesimal
neighbourhood $X^{(n)}$, an algebraic object, and consequently $E$ is
algebraic.
\end{proof}

\section{Local deformation space}

Let $W$ be a connected, complex manifold (or smooth algebraic variety)
and $X \subset W$ reduced, connected and a local complete
intersection in $W$. We recall some general definitions and properties
of deformation spaces.

\begin{defn}[Deformation of a vector bundle, {\cite[p.\ 113]{pet1}}]
Let $X$ be complex space and $V_0$ a holomorphic vector bundle on $X$. A
\emph{deformation} of $V_0$ is a pair $(S,V)$ consisting of a germ
$(S,s_0)$ of a complex space and a vector bundle $V$ on $X \times S$
together with an isomorphism $V_0 \cong V\rvert_{X\times\{s_0\}}$.

Let $(S',V')$ be another deformation of $V_0$. A \emph{morphism}
$(S',V') \to (S,V)$ is a pair $(\alpha, f)$ consisting of a morphism
of $\alpha \colon S' \to S$ of germs of spaces and a morphism of
vector bundles $f \colon V' \to V$ over $\alpha$ (i.e.\ a holomorphic
map which maps fibrewise and is linear on fibres), which respects the
isomorphisms of the central fibres.

A deformation $(S,V)$ of $V_0$ is called
\begin{enumerate}
\item \emph{complete} if for every deformation $(S',V')$ of $V_0$
      there exists a morphism
      \[ (\alpha,f) \colon (S',V') \to (S,V) \text{ ,} \]
\item \emph{effective} if the following is true:

      If $(S',V')$ is another deformation of $V_0$ and if $(\alpha,
      f), (\beta, g) \colon (S',V') \to (S,V)$ are morphisms of
      deformations, and if $T(\alpha), T(\beta) \colon T(S') \to T(S)$
      are the corresponding tangential maps, then $T(\alpha) =
      T(\beta)$,
\item \emph{semi-universal} if $(S,V)$ is complete and effective.
\end{enumerate}
\end{defn}

In this paper we calculate the minimal dimension of a semi-universal
deformation for bundles on $\widehat X$ in the case when $N_{X,W}^*$
is ample. The existence of a finite-dimensional, semi-universal
deformation in this situation is given by results of Peternell:
  
\begin{remark}\label{rem.dim1}
If the conormal sheaf $N_{X,W}^*$ is ample, the deformation space of a
bundle on $\widehat{X}$ is finite-dimensional:

Fix an integer $m \geq 0$, a vector bundle $E_m$ on $X^{(m)}$ and set
$E_0 \ce E_m\rvert_X$. If
\[ h^2\bigl(X; \SEnd E_0 \otimes S^m(N_{X,W}^*) \bigr) = 0 \text{ ,} \]
then there exists a vector bundle $E_{m+1}$ on $X^{(m+1)}$ such that
$E_{m+1}\vert_{X^{(m)}} \cong E_m$ (\cite[Satz 1]{pet1}). 

Now let $F$ be a vector bundle over $X$ such that $h^2\bigl(X; \SEnd F
\otimes S^t(N_{X,W}^*) \bigr) = 0$ for all $t>0$. If $N_{X,W}^*$ is
ample, then $h^1\bigl(X; \SEnd F \otimes S^t(N_{X,W}^*) \bigr) = 0$
for $t \gg 0$, and hence
\[ \gamma = \sum_{t \geq 0} h^1\bigl( X; \SEnd F \otimes S^t(N_{X,W}^*) \bigr) < +\infty \text{ .} \]
Then there exists a vector bundle $G$ on $\widehat X$ such that
$G\vert_X \cong F$, and for a fixed such $G$ the \emph{deformation
space} of $G$ is isomorphic to $\mathbb{C}^\gamma$ (\cite[Satz 2]{pet2},
and first Bemerkung at p.\ 115, and see also \cite[Theorem
10.3.16]{djp}). There is a vector bundle $A$ on an analytic
neighbourhood $U$ of $X$ in $W$ such that $A\vert_{\widehat X} = G$,
and hence $A\vert_X \cong F$ (\cite[Satz 3]{pet2}).
\end{remark}

We will work with objects on infinitesimal neighbourhoods $X^{(m)}$ or
on the formal completion $\widehat X$ of $X \subset W$. However, in
general the goal is to obtain statements about an actual neighbourhood
of $X$, i.e.\ a tubular neighbourhood. Extending objects that are
defined on the formal neighbourhood to a tubular neighbourhood is done
via the \emph{Formal Principle}.

We restrict our attention to the case when $X$ is an exceptional set,
i.e.\ when $W$ is the resolution $\sigma \colon W \to W'$ of some
(possibly singular) space $W'$, $\sigma(X)=x\in W'$ and $W \backslash
X \cong W'\backslash\{x\}$ via $\sigma$. 

\begin{theorem}[Existence, \cite{pet1}]\label{thm.exist}
Let $W$ be a complex space and let $X$ be an exceptional subspace. Let
$\mathcal{E}$ be a locally free sheaf on the formal completion
$\widehat X$ of $X$. Then there is a locally free sheaf $\mathcal{F}$
on a neighbourhood $U \supset X$ in $W$ such that
$\mathcal{F}\rvert_{\widehat X} \cong \mathcal{E}$. Moreover,
$\mathcal{F}$ is uniquely determined by the germ of the embedding
$X\hookrightarrow W$.\qed
\end{theorem}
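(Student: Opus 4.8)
The statement is Peternell's existence theorem, i.e.\ the Formal Principle for exceptional subspaces; since it is quoted from \cite{pet1} the excerpt omits the argument, so here is the shape of a proof. The plan is to transport the problem along the given contraction and thereby reduce the extension of $\mathcal{E}$ to an algebraization statement at a single point. By hypothesis $X$ is exceptional, so I already have the proper modification $\sigma \colon W \to W'$ with $\sigma(X) = x$ and $W \setminus X \cong W' \setminus \{x\}$; no blow-down criterion needs to be invoked. First I would compare the two formal completions. Applying the theorem on formal functions to $\sigma$ at $x$ (in its analytic form), together with the vanishing $H^i\bigl(X^{(m)}; \mathcal{O}_{X^{(m)}}\bigr) = 0$ for $i \geq 1$ that follows from $\dim X = 1$ and the ampleness of $N_{X,W}^*$ (cf.\ Definition \ref{def.ample}), identifies the completed local ring $\widehat{\mathcal{O}}_{W',x}$ with $\varprojlim_m H^0\bigl(X^{(m)}; \mathcal{O}_{X^{(m)}}\bigr)$. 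Thus coherent data on $\widehat X$ push forward to formal data at the point $x$.

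Next I would produce the coherent formal module. Pushing $\mathcal{E}$ forward term by term and passing to the limit, the same positivity forces the higher formal direct images to vanish, so that $M \ce \varprojlim_m H^0\bigl(X^{(m)}; \mathcal{E}\rvert_{X^{(m)}}\bigr)$ is a finitely generated $\widehat{\mathcal{O}}_{W',x}$-module, reflexive of the expected rank because $\mathcal{E}$ is locally free. The crucial step is then algebraization: I would invoke Grothendieck's formal existence theorem together with Artin approximation to realise $M$ as the formal completion of an honest coherent sheaf $\mathcal{F}'$ on an analytic neighbourhood $U' \ni x$ in $W'$. Setting $U \ce \sigma^{-1}(U')$ and $\mathcal{F} \ce \sigma^* \mathcal{F}'$ (taking the reflexive hull if $W'$ is singular at $x$) yields a sheaf on a neighbourhood of $X$; local freeness of $\mathcal{F}$ is checked by noting that $\sigma$ is an isomorphism over $U' \setminus \{x\}$ and that $\mathcal{F}\rvert_{\widehat X} \cong \mathcal{E}$ is locally free along $X$, and unwinding the construction gives the required isomorphism on $\widehat X$.

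For uniqueness I would argue that the completion functor $\mathcal{G} \mapsto \mathcal{G}\rvert_{\widehat X}$ is fully faithful on locally free sheaves defined near $X$. If $\mathcal{F}_1, \mathcal{F}_2$ both restrict to $\mathcal{E}$, then they are isomorphic over every $X^{(m)}$, and the vanishing of $H^1$ for high symmetric powers of $N_{X,W}^*$ lets one assemble these into a single isomorphism over $\widehat X$, which the algebraization step above then spreads out to a common neighbourhood. Since $\sigma$ and the entire comparison depend only on the germ of the embedding $X \hookrightarrow W$, so does $\mathcal{F}$.

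The main obstacle is precisely the algebraization/convergence step. The Formal Principle is \emph{false} for general $X$: a sheaf prescribed on all infinitesimal neighbourhoods need not converge to one on an open neighbourhood, and the failure is measured by genuinely formal (non-convergent) cohomology classes. What rescues the situation here is exceptionality, equivalently the positivity of $N_{X,W}^*$, which simultaneously supplies the contraction $\sigma$ and the cohomological vanishing that makes $M$ coherent and algebraizable. Carrying out the passage from the completed local ring at $x$ back to a convergent neighbourhood rigorously --- controlling denominators and invoking the correct analytic form of Artin approximation (or Grauert's direct-image and blowing-down machinery) --- is where the real work lies; the remainder is formal bookkeeping along $\sigma$.
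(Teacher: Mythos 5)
The paper offers no proof of this theorem to compare against: it is imported verbatim from Peternell \cite{pet1} and stated with the proof omitted, so your sketch can only be judged on its own terms. On those terms it captures the standard strategy (contract $X$ to the point $x$, push the formal bundle down to a module over $\widehat{\mathcal{O}}_{W',x}$, algebraize, pull back along $\sigma$), and you are right that the algebraization/convergence step is where the content lies. But as written there are two problems. First, you import hypotheses that are not in the statement: the theorem concerns an arbitrary exceptional subspace $X$ of a complex space $W$, with no assumption that $\dim X = 1$ or that $N_{X,W}^*$ is ample, so the claimed vanishing $H^i\bigl(X^{(m)};\mathcal{O}_{X^{(m)}}\bigr)=0$ for $i\geq 1$ is both unavailable and false in general --- already $h^1\bigl(X;\mathcal{O}_X\bigr)=g$ for a curve of genus $g>0$, a case the paper needs in Section \ref{sec.g0}. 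The identification $\widehat{\mathcal{O}}_{W',x}\cong\varprojlim_m H^0\bigl(X^{(m)};\mathcal{O}_{X^{(m)}}\bigr)$ comes from the comparison theorem for the proper modification $\sigma$ (together with $\sigma_*\mathcal{O}_W=\mathcal{O}_{W'}$ near $x$), not from cohomology vanishing; likewise the finite generation of $M$ does not follow from ``positivity forcing higher direct images to vanish.''

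Second, the step you yourself flag as crucial is genuinely absent. Grauert's coherence theorem and the theorem on formal functions apply to coherent sheaves on $W$, whereas $\mathcal{E}$ lives only on $\widehat X$; so one must first establish coherence of the \emph{formal} direct image over $\widehat{\mathcal{O}}_{W',x}$, and then descend a finite module over the completion of an analytic local ring to a coherent sheaf on an actual neighbourhood of $x$. Invoking ``Grothendieck's existence theorem together with Artin approximation'' names this problem rather than solving it: approximating a presentation matrix does not by itself guarantee that the approximating cokernel completes to $M$, and controlling this is exactly the substance of Peternell's argument. The uniqueness paragraph has the same issue in miniature, since the $H^1$-vanishing for high symmetric powers of $N_{X,W}^*$ that you use to glue the isomorphisms over the $X^{(m)}$ again rests on an ampleness hypothesis the theorem does not make. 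In short: a reasonable road map, but the destination is assumed rather than reached.
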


\section{Local holomorphic Euler characteristic}


Suppose that $Z$ is an exceptional curve in $W$, so that $Z$ can be
contracted to a point, and let $W'$ be the space obtained from $W$ by
contracting $Z$ to a (singular) point $x$, and let $\pi \colon W \to
W'$ be the contraction. Following Blache \cite[3.9]{bla1}, we define
the \emph{local holomorphic Euler characteristic} of reflexive sheaves
near $Z$.

\begin{defn}\label{def.locholeul}
The \emph{local holomorphic Euler characteristic} of a reflexive sheaf
$\mathcal{F}$ near $Z \subset W$ is
\begin{equation}\label{eq.locholeul}
  \chi\bigl(Z, \mathcal{F}\bigr) \ce \chi\bigl(x, \pi_*\mathcal{F}\bigr) \ce h^0 \bigl(W'; (\pi_*\mathcal{F})^{\vee\vee}\!\!
  \bigl/ \pi_*\mathcal{F}\bigr) + \sum_{i=1}^{r-1} (-1)^{i-1} h^0\bigl(W';
  R^i \pi_*\mathcal{F} \bigr) \text{ ,}
\end{equation}
where $r = \rk\mathcal{F}$.
\end{defn}

Since in our case $W$ is of cohomological dimension $1$, the higher
direct images $R^i\pi_*\mathcal{F}$ in Equation \eqref{eq.locholeul}
are all zero for $i>1$.

\begin{lemma}\label{lem.nowidth}
Let $Z$ be a curve of codimension $n \geq 2$ in $W$. Then for any
reflexive sheaf $\mathcal{F}$ on $W$ we have
\[ h^0 \bigl(W'; (\pi_*\mathcal{F})^{\vee\vee}\!\!\bigl/ \pi_*\mathcal{F}\bigr) = 0 \text{ .} \]
\end{lemma}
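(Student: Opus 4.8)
The plan is to show that $\pi_*\mathcal F$ is already reflexive on $W'$; then $(\pi_*\mathcal F)^{\vee\vee} = \pi_*\mathcal F$, the quotient vanishes, and the asserted $h^0$ is $0$. Since $\pi$ restricts to an isomorphism $W\setminus Z \to W'\setminus\{x\}$ and $\mathcal F$ is reflexive, $\pi_*\mathcal F$ is reflexive on the punctured space $U' \ce W'\setminus\{x\}$; hence the comparison map $\pi_*\mathcal F \to (\pi_*\mathcal F)^{\vee\vee}$ is an isomorphism away from $x$, and the quotient is supported at the single point $x$. Writing $R = \mathcal O_{W',x}$ (a normal local ring of dimension $n+1\geq 3$, normal because $W'$ is the contraction of an exceptional set in a smooth space) and $M = (\pi_*\mathcal F)_x$, reflexivity of $M$ over $R$ is equivalent to Serre's condition $S_2$, which at the closed point means $\operatorname{depth}_{\mathfrak m} M \geq 2$, i.e.\ $H^0_{\{x\}}(W',\pi_*\mathcal F) = H^1_{\{x\}}(W',\pi_*\mathcal F) = 0$. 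So everything reduces to these two local-cohomology vanishings.

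To obtain them I would transport local cohomology through $\pi$. Since $\pi$ is proper with $\pi^{-1}(x) = Z$ set-theoretically, one has the functorial identity $R\Gamma_{\{x\}}(W',-) \circ R\pi_* = R\Gamma_Z(W,-)$, giving a Grothendieck spectral sequence $E_2^{p,q} = H^p_{\{x\}}\bigl(W', R^q\pi_*\mathcal F\bigr) \Rightarrow H^{p+q}_Z(W,\mathcal F)$. As noted after Definition \ref{def.locholeul}, $R^q\pi_*\mathcal F = 0$ for $q\geq 2$, so only the rows $q=0,1$ contribute. The corner term gives $H^0_{\{x\}}(W',\pi_*\mathcal F) \cong H^0_Z(W,\mathcal F)$, and the five-term exact sequence yields an injection $H^1_{\{x\}}(W',\pi_*\mathcal F)\hookrightarrow H^1_Z(W,\mathcal F)$. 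Hence it suffices to prove $H^0_Z(W,\mathcal F) = H^1_Z(W,\mathcal F) = 0$.

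This last vanishing is exactly where the hypothesis $n\geq 2$ enters. Since $W$ is smooth and $\mathcal F$ is reflexive, sections of $\mathcal F$ extend across any closed subset of codimension $\geq 2$ (the Hartogs property of reflexive sheaves on regular varieties); as $Z$ has codimension $n\geq 2$, the open inclusion $\ell\colon W\setminus Z\hookrightarrow W$ satisfies $\mathcal F \cong \ell_*\ell^*\mathcal F$, equivalently the sheafy local cohomologies $\mathscr H^0_Z(\mathcal F)$ and $\mathscr H^1_Z(\mathcal F)$ vanish, whence $H^0_Z(W,\mathcal F) = H^1_Z(W,\mathcal F) = 0$ by the low-degree terms of $H^p\bigl(Z,\mathscr H^q_Z(\mathcal F)\bigr)\Rightarrow H^{p+q}_Z(W,\mathcal F)$. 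Combining the three steps shows $M$ is reflexive, so $(\pi_*\mathcal F)^{\vee\vee}\bigl/\pi_*\mathcal F = 0$ and the claim follows. I expect the main obstacle to be the bookkeeping in the second step---justifying the functorial identity $R\Gamma_{\{x\}}\circ R\pi_* = R\Gamma_Z$ and extracting the correct edge maps from the spectral sequence---whereas the codimension hypothesis $n \geq 2$ is used, cleanly and essentially, only in the final step.
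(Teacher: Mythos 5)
Your argument is correct and, once unwound, coincides with the paper's: both reduce the claim to showing that $\pi_*\mathcal{F}$ is reflexive by identifying sections of $\pi_*\mathcal{F}$ on a punctured neighbourhood of $x$ with sections of $\mathcal{F}$ off $Z$, extending the latter across $Z$ by Hartogs (reflexivity of $\mathcal{F}$ on the smooth $W$ plus codimension $\geq 2$), and concluding via the normality/depth characterisation of reflexivity. The paper carries this out by direct section-chasing and a citation of \cite[Proposition 1.6]{har_srs1}, whereas you package the identical content as the vanishing of $H^0_{\{x\}}$ and $H^1_{\{x\}}$ transported through the Leray spectral sequence for $\pi$.
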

\begin{proof}
The map $\pi$ contracts $Z$ to the point $x$, whereas the restriction
$\pi\rvert_{W \setminus Z} \colon W \setminus Z \to W' \setminus
\{x\}$ is an isomorphism. Let $\widetilde U \subseteq W$ be an open
set containing $Z$ and let $U=\pi(\widetilde U)$. For a section
$\sigma \in \Gamma(U\setminus\{x\}, \pi_*\mathcal{F})$ there is thus a
corresponding section $\widetilde\sigma \in \Gamma(\widetilde U
\setminus Z,\mathcal{F})$ such that $\pi_*\widetilde\sigma = \sigma$.

However, $Z$ is of codimension $n>1$ in $W$, so $\widetilde\sigma$ can
be extended uniquely (by Hartogs' Theorem) to a section
$\widetilde\tau \in \Gamma(\widetilde U,\mathcal{F})$ such that
$\widetilde\tau\rvert_{\widetilde U \setminus Z} = \widetilde\sigma$,
and thus the image $\tau \ce \pi_*\widetilde\tau$ is a unique
extension of $\sigma$ over the singular point, i.e.\
$\tau\rvert_{U \setminus \{x\}} = \sigma$. It follows that
$\Gamma(U \setminus \{x\}, \pi_*\mathcal{F}) \cong \Gamma(U,
\pi_*\mathcal{F})$ for all open sets $U \subseteq W'$, making
$\pi_*\mathcal{F}$ a \emph{normal} sheaf in the sense of Barth, and by
\cite[Proposition 1.6]{har_srs1} $\pi_*\mathcal{F}$ is reflexive; i.e.\
$(\pi_*\mathcal{F})^{\vee\vee} \cong \pi_*\mathcal{F}$.
\end{proof}
\begin{remark}
Note that this result is in strong contrast with the case of surfaces
($n=1$), for which $h^0 \bigl(W';
(\pi_*\mathcal{F})^{\vee\vee}\!\!\bigl/ \pi_*\mathcal{F}\bigr)$
attains a wide variety of values (compare with \cite[Theorem
2.16]{bgk1}).
\end{remark}

\begin{corollary}
If the codimension of the curve $Z$ in $W$ is at least $2$, then for
any reflexive sheaf $\mathcal{F}$ on $W$, we have $\chi\bigl(Z,
\mathcal{F}\bigr) = h^0\bigl(W'; R^1\pi_*\mathcal{F}\bigr)$.\qed
\end{corollary}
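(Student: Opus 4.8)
The plan is to read the assertion directly off Definition~\ref{def.locholeul}, substituting into Equation~\eqref{eq.locholeul} the two ingredients that are already in place. First I would recall the remark immediately following Definition~\ref{def.locholeul}: since $W$ has cohomological dimension $1$, the higher direct images satisfy $R^i\pi_*\mathcal{F} = 0$ for every $i > 1$. Consequently every summand with index $i \geq 2$ in the sum $\sum_{i=1}^{r-1}(-1)^{i-1}h^0\bigl(W'; R^i\pi_*\mathcal{F}\bigr)$ vanishes, and only the $i=1$ term can survive.

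Next I would invoke Lemma~\ref{lem.nowidth}. Because $Z$ has codimension $n \geq 2$ in $W$, that lemma gives $h^0\bigl(W'; (\pi_*\mathcal{F})^{\vee\vee}\bigl/\pi_*\mathcal{F}\bigr) = 0$ for any reflexive sheaf $\mathcal{F}$. This eliminates the first summand of Equation~\eqref{eq.locholeul} entirely, leaving only the truncated sum handled in the previous step.

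Combining the two observations, the sole surviving contribution to $\chi\bigl(Z,\mathcal{F}\bigr)$ is the $i=1$ term, which enters with sign $(-1)^{1-1} = +1$, so that $\chi\bigl(Z,\mathcal{F}\bigr) = h^0\bigl(W'; R^1\pi_*\mathcal{F}\bigr)$. I do not expect any genuine obstacle here: the substantive content lies in Lemma~\ref{lem.nowidth} and in the cohomological-dimension observation, and the corollary is pure bookkeeping. The only point deserving a moment's attention is verifying that the retained index-$1$ term carries a positive sign, which it does.
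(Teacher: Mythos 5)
Your proposal is correct and is exactly the argument the paper intends: the corollary is stated with no written proof precisely because it follows immediately from Lemma~\ref{lem.nowidth} (killing the $h^0$ of the double-dual quotient in codimension $\geq 2$) together with the remark that $R^i\pi_*\mathcal{F}=0$ for $i>1$ since $W$ has cohomological dimension $1$. The sign check on the surviving $i=1$ term is the only bookkeeping, and you handled it.
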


\section{Total space of \texorpdfstring{$\bm{\mathcal{O}_{\mathbb{P}^1}(-1)^{ \oplus n}}$}{Tot(O(-1) \textcircumflex (+n) )}}\label{sec.P1}

Let $W$ be an $(n+1)$-dimensional complex manifold, $Z \subset W$ with
$Z \cong {\mathbb {P}}^1$ and $N_{Z,W}$ isomorphic to the direct sum of
$n$ line bundles of degree $-1$. In fact, we can assume that $W=
\Tot\bigl( \mathcal{O}_{\mathbb{P}^1}(-1)^{\oplus n} \bigr)$. We
denote by $\widetilde{Z}$ the inverse limit of all complex manifolds
$W'$ for which $W'$ is an open neighbourhood of $Z$ in $W$ and by
$\widehat{Z} = \varprojlim_m Z^{(m)}$ the formal completion of $Z$ in
$W$.

\begin{lemma}\label{lem.pic}
$\Pic\widetilde{Z} \equiv \Pic\widehat{Z} \cong \mathbb{Z}$. More
precisely, every line bundle $L$ on $\widetilde{Z}$ or $\widehat Z$ is
uniquely determined by the integer $\deg(L\vert_Z)$.
\end{lemma}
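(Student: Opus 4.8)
The plan is to prove the two claimed statements in Lemma \ref{lem.pic} by first computing $\Pic\widehat{Z}$ explicitly and then identifying it with $\Pic\widetilde{Z}$. For the formal completion, I would use the standard description $\Pic\widehat{Z} = \varprojlim_m \Pic Z^{(m)}$, where the inverse limit is taken along the restriction maps $\rho_{m,m-1}\colon \Pic Z^{(m)} \to \Pic Z^{(m-1)}$. The base case is $\Pic Z = \Pic\mathbb{P}^1 \cong \mathbb{Z}$, with a line bundle determined by its degree. I would then analyse the kernel of each restriction map $\rho_{m,m-1}$.

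The key computation uses the exponential-type exact sequence of sheaves of units on the infinitesimal neighbourhoods. First I would establish that the kernel of $\Pic Z^{(m)} \to \Pic Z^{(m-1)}$ is controlled by $H^1\bigl(Z; S^m(N^*)\bigr)$, coming from the exact sequence
\[ 0 \longrightarrow S^m(N^*) \longrightarrow \mathcal{O}_{Z^{(m)}}^* \longrightarrow \mathcal{O}_{Z^{(m-1)}}^* \longrightarrow 1 \text{ ,} \]
where the additive sheaf $S^m(N^*)$ embeds into the multiplicative units via $1 + (\,\cdot\,)$, this map being well-defined because the ideal of $Z^{(m-1)}$ inside $Z^{(m)}$ squares to zero. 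In the present setting $N^* = \mathcal{O}_{\mathbb{P}^1}(1)^{\oplus n}$, so $S^m(N^*)$ is a direct sum of copies of $\mathcal{O}_{\mathbb{P}^1}(m)$ with $m \geq 1$, and hence $H^1\bigl(Z; S^m(N^*)\bigr) = 0$. This forces every $\rho_{m,m-1}$ to be injective; combined with the surjectivity already recorded in Lemma \ref{lem.a1} (which rests on the vanishing of $H^2$), each restriction map is an isomorphism, and therefore $\Pic\widehat{Z} \cong \Pic Z \cong \mathbb{Z}$, with the isomorphism given precisely by $L \mapsto \deg(L\rvert_Z)$.

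To identify $\Pic\widetilde{Z}$ with $\Pic\widehat{Z}$, I would argue that a line bundle on some open neighbourhood of $Z$ is determined by its formal completion along $Z$. The cleanest route is to invoke the Existence Theorem \ref{thm.exist}, since $Z$ is exceptional in $W = \Tot\bigl(\op(-1)^{\oplus n}\bigr)$ (it contracts to the vertex of the cone): every locally free sheaf on $\widehat Z$ extends uniquely to a neighbourhood, and in rank one this gives a bijection between $\Pic\widetilde{Z}$ and $\Pic\widehat{Z}$ compatible with restriction to $Z$. Thus both Picard groups are $\mathbb{Z}$, detected by degree on $Z$.

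The main obstacle is the verification that the kernel of $\rho_{m,m-1}$ is exactly $H^1\bigl(Z; S^m(N^*)\bigr)$ and not merely controlled by it: one must check that the connecting maps in the long exact cohomology sequence of the units-sequence above behave as expected and that the identification of the additive kernel sheaf with $S^m(N^*)$ is correct, i.e.\ that $\mathcal{I}^m/\mathcal{I}^{m+1} \cong S^m(N^*)$ for the smooth complete intersection $Z$. The vanishing $H^1\bigl(\mathbb{P}^1; \mathcal{O}(m)\bigr) = 0$ for $m \geq 1$ then does the real work, so once the sheaf-theoretic setup is correct the cohomological input is immediate. Care is also needed to confirm that surjectivity from Lemma \ref{lem.a1} genuinely upgrades injectivity to an isomorphism of the whole inverse system, so that no nontrivial line bundles are lost in the limit.
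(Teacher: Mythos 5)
Your argument is correct and takes essentially the same route as the paper: both reduce to showing that each restriction map $\Pic Z^{(m)} \to \Pic Z^{(m-1)}$ is bijective, with injectivity from $H^1\bigl(Z; S^m(N^*)\bigr)=0$ and surjectivity from $H^2$-vanishing (your truncated units sequence $0 \to S^m(N^*) \to \mathcal{O}_{Z^{(m)}}^* \to \mathcal{O}_{Z^{(m-1)}}^* \to 1$ is exactly the mechanism the paper leaves implicit), followed by a comparison of $\Pic\widetilde{Z}$ with $\Pic\widehat{Z}$. The only cosmetic difference is that the paper cites Grothendieck's existence theorem for that last step where you invoke Peternell's Theorem \ref{thm.exist}; both suffice here since $Z$ is exceptional in $W$.
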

\begin{proof}
To check the result for $\widehat Z$, it is sufficient to show that
for all integers $i \geq 0$ the restriction map $\rho _i \colon \Pic
Z^{(i+1)} \to \Pic Z^{(i)}$ is bijective. The map $\rho _i$ is
injective (resp.\ surjective) because $H^1\bigl(Z; S^i(N_{Z,W}^*)
\bigr) = 0$ (resp.\ $H^2\bigl(Z; S^i(N_{Z,W}^*)\bigr) = 0$). The
restriction map $\Pic\widetilde{Z} \to \Pic\widehat Z$ is bijective by
Grothendieck's existence theorem (\cite[5.1.4]{gro-EGAIII1}).
\end{proof}

\paragraph{Notation:} Let $L_k$ denote the only line bundle on $\widetilde{Z}$
or $\widehat Z$ such that $\deg(L\vert_Z) = k$. Also, for any
$\mathcal{O}_W$-module $\mathcal{F}$, we write
\[ \mathcal{F}^{(n)} \ce \mathcal{F} \otimes_{\mathcal{O}_W} \bigl( \mathcal{O}_W
   \bigl/ \mathcal{I}^{n+1} \bigr) \text{ ,} \]
where $\mathcal{I}$ is the ideal sheaf of $Z \subset W$. We can think
of $\mathcal{F}^{(n)}$ as the sheaf of sections of $\mathcal{F}$ on
the $n^\text{th}$ infinitesimal neighbourhood of $Z$.

\begin{lemma}\label{lem.zero}
$H^1\bigl(\widetilde{Z}; L_k\bigr) = H^1\bigl(\widehat Z; L_k\bigr) = 0$ for all $k \geq -1$.
\end{lemma}
\begin{proof}
Direct calculation.
\end{proof}

\begin{proposition}\label{prop.split}
Let $E$ be a vector bundle on $\widetilde{Z}$ or $\widehat Z$ such that
$E\rvert_Z$ has splitting type $a_1 \geq \dotsb \geq a_r$ with $a_r \geq
a_1-1$. Then $E \equiv \bigoplus_{i=1}^{r} L_{a_i}$.
\end{proposition}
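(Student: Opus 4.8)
The plan is to construct an isomorphism between $E$ and the model bundle $F \ce \bigoplus_{i=1}^{r} L_{a_i}$ by lifting it inductively up the tower of infinitesimal neighbourhoods. By the Birkhoff--Grothendieck theorem, $E\rvert_Z \cong \bigoplus_{i=1}^{r} \op(a_i) \cong F\rvert_Z$, so I would fix an isomorphism $\phi_0 \colon E\rvert_Z \to F\rvert_Z$. The strategy is then to show that $\phi_0$ extends to an isomorphism $\phi_m \colon E\rvert_{Z^{(m)}} \to F\rvert_{Z^{(m)}}$ for every $m$, and that these assemble into an isomorphism over $\widehat Z$.

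First I would set up the obstruction to extending an isomorphism by one order. Writing $\mathcal{H} \ce \SHom(E, F)$ on $\widehat Z$, the kernel of the restriction $\mathcal{H}\rvert_{Z^{(m+1)}} \to \mathcal{H}\rvert_{Z^{(m)}}$ is $\SHom(E\rvert_Z, F\rvert_Z) \otimes S^{m+1}(N^*) = \SEnd(E\rvert_Z) \otimes S^{m+1}(N^*)$, using the identification $\mathcal{I}^{m+1}\bigl/\mathcal{I}^{m+2} \cong S^{m+1}(N^*)$ and the exact sequences already employed in Lemma~\ref{lem.extension2}. Thus a section $\phi_m$ lifts to a section over $Z^{(m+1)}$ exactly when its image under the connecting map into $H^1\bigl(Z; \SEnd(E\rvert_Z) \otimes S^{m+1}(N^*)\bigr)$ vanishes.

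The crux is the vanishing of this group for all $m \geq 0$, and this is precisely where the hypothesis $a_r \geq a_1 - 1$ enters. Since $E\rvert_Z \cong \bigoplus \op(a_i)$ and $N^* \cong \op(1)^{\oplus n}$, one computes $S^{m+1}(N^*) \cong \op(m+1)^{\oplus \binom{n+m}{m+1}}$ and hence
\[
  \SEnd(E\rvert_Z) \otimes S^{m+1}(N^*) \cong \bigoplus_{i,j} \op\bigl(a_i - a_j + m + 1\bigr)^{\oplus \binom{n+m}{m+1}} \text{ .}
\]
The hypothesis gives $a_i - a_j \geq a_r - a_1 \geq -1$ for all $i,j$, so each twist has degree $a_i - a_j + m + 1 \geq 0 \geq -1$ once $m \geq 0$; therefore every summand has vanishing $H^1$ on $\mathbb{P}^1$, and the obstruction group is zero.

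Consequently every $\phi_m$ lifts, and choosing the lifts compatibly produces a section of $\mathcal{H}$ over $\widehat Z = \varprojlim_m Z^{(m)}$ reducing to $\phi_0$ on $Z$. It then remains to observe that such a section is automatically an isomorphism: $Z^{(m)}$ has underlying space $Z$, so at each point the homomorphism is invertible iff its reduction modulo nilpotents is, which holds because $\phi_0$ is an isomorphism. This yields $E \cong F$ on $\widehat Z$, and the corresponding statement on $\widetilde Z$ follows from the equivalence between bundles on $\widetilde Z$ and $\widehat Z$ furnished by the formal principle (Theorem~\ref{thm.exist}), exactly as in Lemma~\ref{lem.pic}. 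The only genuine input is the cohomology vanishing, and the delicate point to get right is that the bound $a_r \geq a_1 - 1$ is exactly what prevents the smallest twist $a_r - a_1 + 1$ from dropping below $-1$; the argument breaks as soon as the splitting type spreads by more than one, which is consistent with the nontrivial deformations counted by Theorem~\ref{thm.gamma}.
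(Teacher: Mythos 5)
Your argument is correct, but it follows a genuinely different route from the paper's. The paper's one-line proof combines Lemma \ref{lem.extension2} (write $E$ as an iterated extension of line bundles on $\widehat Z$) with Lemma \ref{lem.zero} (the extension classes live in groups $\Ext^1(L_c,L_{c'}) = H^1\bigl(\widehat Z; L_{c'-c}\bigr)$, which vanish once $c'-c \geq -1$), so the filtration splits. You instead fix the model $F = \bigoplus_{i} L_{a_i}$ and lift an isomorphism $E\rvert_Z \to F\rvert_Z$ order by order through the $Z^{(m)}$, with obstructions in $H^1\bigl(Z; \SEnd(E\rvert_Z) \otimes S^{m+1}(N^*)\bigr)$, which vanish because $a_i - a_j + m + 1 \geq m \geq 0$. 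Both arguments ultimately rest on $h^1\bigl(\mathbb{P}^1, \op(k)\bigr)=0$ for $k \geq -1$, but your version has a concrete advantage: the filtration produced by Lemma \ref{lem.extension2} starts from a section of $E(tH)$ with $t \gg 0$, so its sub-line-bundle has very negative degree and the resulting extension classes need \emph{not} lie in a vanishing $H^1$; to run the paper's argument one must first re-choose the filtration so that the quotient degrees decrease (e.g.\ by extending the degree-$a_1$ sub-line-bundle of $E\rvert_Z$ to $\widehat Z$), and that step is itself an obstruction computation of exactly the kind you carry out. Your direct lifting sidesteps this bookkeeping and is the same mechanism underlying Remark \ref{rem.dim1} and Theorem \ref{thm.gamma}; the passage from $\widehat Z$ to $\widetilde Z$ via the existence/uniqueness statement of Theorem \ref{thm.exist} matches what the paper does in Lemma \ref{lem.pic} and Remark \ref{rem.dim2}.
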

\begin{proof}
Use Lemmas \ref{lem.extension2} and \ref{lem.zero}.
\end{proof}

\begin{remark}\label{rem.dim2}
By Remark \ref{rem.dim1} the dimension of the local deformation space
of any vector bundle $E$ on $\widehat Z$ is
\[ \gamma = \sum_{t \geq 0} h^1\bigl( Z; \SEnd(E\rvert_Z) \otimes S^t(N_{Z,W}^*) \bigr) < +\infty \text{ .} \]
Now $a_1 \geq \dotsb \geq a_r$ denote the splitting type of
$E\rvert_Z$. Since here $N_{Z,W}^* \cong \op(1)^{\oplus n}$, we have
\[ \gamma(E) =  \sum_{t \geq 0} \binom{t+n-1}{t} h^1\bigl(\mathbb{P}^1; \SEnd(E\rvert_Z)(t) \bigr) \text{ .} \]
Thus the dimension of the deformation space of $E$ depends only on
$n$ and the splitting type of $E\rvert_Z$. By Grothendieck's existence
theorem the same is true with $\widetilde{Z}$ instead of $\widehat Z$.
\end{remark}

We
calculate $\gamma(E)$:
 
\begin{theorem}[Dimension of local deformation space]\label{thm.gamma}
Let $E$ be a vector bundle on $\widetilde{Z}$ or $\widehat Z$ such that
$E\rvert_Z$ has splitting type $a_1 \geq \dotsb \geq a_r$. Then the
dimension of the local deformation space at $E$ is
\[ \gamma(E) = \sum_{a_i-a_j>1} \sum _{t=0}^{a_i-a_j-2} (a_i-a_j-1-t) \cdot \binom{t+n-1}{t} \text{ .} \]
\end{theorem}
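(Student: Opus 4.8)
The plan is to reduce the computation entirely to the formula in Remark~\ref{rem.dim2}, which already expresses $\gamma(E)$ as $\sum_{t\geq 0}\binom{t+n-1}{t}\,h^1\bigl(\mathbb{P}^1;\SEnd(E\rvert_Z)(t)\bigr)$ and reveals that only the splitting type of $E\rvert_Z$ matters. So the theorem is really a statement about cohomology of endomorphism sheaves on $\mathbb{P}^1$, and the binomial coefficients simply record the rank $\binom{t+n-1}{t}$ of $S^t(N_{Z,W}^*) = S^t\bigl(\op(1)^{\oplus n}\bigr)$. First I would write $E\rvert_Z \cong \bigoplus_{i=1}^r \op(a_i)$, so that $\SEnd(E\rvert_Z) \cong \bigoplus_{i,j}\op(a_i-a_j)$ and therefore $\SEnd(E\rvert_Z)(t) \cong \bigoplus_{i,j}\op(a_i-a_j+t)$.

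The key arithmetic step is then the standard fact that for a line bundle on $\mathbb{P}^1$ one has $h^1\bigl(\mathbb{P}^1;\op(d)\bigr) = \max\{0,\,-d-1\}$, i.e.\ it equals $-d-1$ when $d \leq -2$ and vanishes otherwise. Applying this to each summand $\op(a_i-a_j+t)$ gives
\[ h^1\bigl(\mathbb{P}^1;\SEnd(E\rvert_Z)(t)\bigr) = \sum_{i,j}\max\bigl\{0,\,a_j-a_i-t-1\bigr\} \text{ .} \]
Substituting this into the formula from Remark~\ref{rem.dim2} yields
\[ \gamma(E) = \sum_{t\geq 0}\binom{t+n-1}{t}\sum_{i,j}\max\bigl\{0,\,a_j-a_i-t-1\bigr\} \text{ .} \]

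Next I would interchange the order of summation, pulling the sum over pairs $(i,j)$ to the outside, and identify which pairs contribute. The summand $a_j - a_i - t - 1$ is positive only when $a_j - a_i \geq 2$ and $t \leq a_j - a_i - 2$; relabelling so that the positive difference is written $a_i - a_j$ (i.e.\ the roles of $i$ and $j$ are swapped to match the stated form), the condition becomes $a_i - a_j > 1$ with the inner index $t$ running from $0$ to $a_i - a_j - 2$. For each such pair and each admissible $t$ the contribution is $\bigl(a_i - a_j - 1 - t\bigr)\binom{t+n-1}{t}$, which is exactly the summand in the claimed formula. Collecting these gives
\[ \gamma(E) = \sum_{a_i-a_j>1}\ \sum_{t=0}^{a_i-a_j-2}\bigl(a_i-a_j-1-t\bigr)\binom{t+n-1}{t} \text{ ,} \]
as desired.

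Honestly, there is no real obstacle here: the proof is essentially bookkeeping once Remark~\ref{rem.dim2} is in hand, since the hard analytic content (finiteness of $\gamma$, dependence only on the splitting type, and the reduction from $\widehat Z$ to $\mathbb{P}^1$) has already been established. The only point requiring a little care is the index manipulation in the final step --- keeping track of the vanishing cutoff for $h^1$ and correctly matching the swapped indices $(i,j)$ so that the summation range and summand line up with the stated expression. One should also confirm that pairs with $a_i = a_j$ and pairs with $|a_i - a_j| = 1$ contribute nothing, which they do not since $\max\{0,\,-1\} = \max\{0,\,0\} = 0$, consistent with the fact that $\op(a_i-a_j+t)$ then has nonnegative degree for all $t \geq 0$.
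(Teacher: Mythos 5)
Your proposal is correct and follows essentially the same route as the paper: reduce to the split case via Remark \ref{rem.dim2}, decompose $\SEnd(E\rvert_Z)$ into line bundles $\op(a_i-a_j)$, and count $h^1$ on $\mathbb{P}^1$ twisted by $S^t(N_{Z,W}^*)$. The only (cosmetic) difference is that the paper performs the final count by writing out explicit \v{C}ech $1$-cocycles $\sum \sigma_{i_1\ldots i_n k}\,z^k u_1^{i_1}\dotsm u_n^{i_n}$ in the coordinate charts of $\Tot\bigl(\op(-1)^{\oplus n}\bigr)$, whereas you invoke the closed form $h^1\bigl(\mathbb{P}^1;\op(d)\bigr)=\max\{0,-d-1\}$ together with the rank $\binom{t+n-1}{t}$ of $S^t(N^*)$ --- the resulting bookkeeping is identical.
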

\begin{proof}
According to Remark \ref{rem.dim2}, the dimension $\gamma(E)$ does not
depend on the choice of extension $E$, so we can assume $E = L_{a_1}
\oplus \dotsb \oplus L_{a_r}$. In this case,
\[ \SEnd E  = \bigoplus_{1 \leq i,j \leq r} L_{a_j-a_i} \text{ ,} \]
and consequently
\[ h^1\bigl(Z; \SEnd E \otimes S^t(N_{Z,W}^*)\bigr) = \sum_{1 \leq i,j \leq r}
   h^1\bigl(\mathbb{P}^1; L_{a_i-a_j} \otimes S^t(N_{Z,W}^*)\bigr) \text{ .} \]
If $a_j-a_i \geq -1$, then $h^1\bigl(\mathbb{P}^1; L_{a_j-a_i}\bigr) = 0$,
so we have 
\[ \gamma(E) = \sum_{t \geq 0} \ \sum_{a_i-a_j>1} h^1\bigl(\mathbb{P}^1; L_{a_j-a_i} \otimes S^t(N_{Z,W}^*)\bigr) \text{ .} \]
We perform the computations on $W =
\Tot\bigl(\mathcal{O}_{\mathbb{P}^1}(-1)^{\oplus n}\bigr)$ using the
standard coordinate charts $(z, u_1, \dotsc, u_n) \mapsto (z^{-1},
zu_1, \dotsc, zu_n)$. In these charts, the bundle $L_{a_j-a_i}$ has
transition function $z^{a_i-a_j},$ and consequently a non-trivial
$1$-cocycle $\sigma$ on the $t^\text{th}$ infinitesimal neighbourhood
has an expression of the form:
\[ \sigma = \sum_{i_1 + \dotsb + i_n = t} \ \ \sum_{k = a_j - a_i + 1 + t}^{-1}
   \sigma_{i_1 \ldots i_n k} \; z^k u_1^{i_1} \dotsm u_n^{i_n} \text{ ,} \]
and non-zero terms occur only when $t \leq a_i-a_j-2$. Hence, for
each pair $a_i - a_j > 1$ there are $\sum_{t=0}^{a_i-a_j-2} (a_i-a_j-1-t) \cdot \binom{t+n-1}{t}$ terms.
\end{proof}

\section{Local Calabi-Yau threefolds}\label{sec.localcy}

In this section we specialise to the case of local Calabi-Yau
threefolds. We provide only simple examples to give some idea of the
behaviour of the local holomorphic Euler characteristic ($\chi$ for
short). The general \emph{Macaulay~2} algorithm to compute numerical
invariants for bundles on threefolds will appear in the companion paper
\cite{ko1}. Here we tabulate some invariants for the local threefolds
\[ W_i \ce \Tot\bigl(\op(-i) \oplus \op(i-2)\bigr) \text{ .} \]  
The cases $i=1,2,3$ present quite different behaviour and are
particularly interesting from the point of view of birational
geometry, see \cite{ji}. Recall from Equation \eqref{eq.locholeul}
that the local holomorphic Euler characteristic of a rank-$2$ bundle
is by definition $\chi\bigl(Z, \mathcal{F}\bigr) \ce w + h$, where $w
\ce h^0 \bigl(W'; (\pi_*\mathcal{F})^{\vee\vee}\!\!\bigl/
\pi_*\mathcal{F}\bigr)$ and $h \ce h^0\bigl(W'; R^1 \pi_*\mathcal{F}
\bigr)$. However, by Lemma \ref{lem.nowidth}, on threefolds we have $w
\equiv 0$; and even worse, for $i>1$ bundles on $W_i$ have $\chi =
\infty$. To remedy this situation, we introduce some partial
invariants, obtained by restriction to a surface.

\begin{defn}
Let $D_i \cong \Tot\bigl(\op(-i)\bigr)$, considered as the
hypersurface $\Tot\bigl(\op(-i) \oplus 0\bigr)$ of $W_i$. We define
the \emph{partial invariants}:
\begin{eqnarray*}
  h'\bigl(\mathcal F\bigr) &\ce& h\bigl(\mathcal F\rvert_{D_i}\bigr) \text{ ,} \\
  w'\bigl(\mathcal F\bigr) &\ce& w\bigl(\mathcal F\rvert_{D_i}\bigr) \text{ .}
\end{eqnarray*}
\end{defn}

Some comments about the $W_i$ for $i=1,2,3$:
 
\begin{enumerate}
\item $W_1 \ce \Tot\bigl(\op(-1) \oplus \op(-1)\bigr)$

      Note that $W_1$ is just the specialisation to $n=2$ of the
      spaces in the previous section; and appears in the well-known
      diagram of the basic flop: Let $X$ be the cone over the ordinary
      double point defined by the equation $xy-zw=0$ on
      $\mathbb{C}^4$. The basic flop is described by the diagram:
      \[ \begin{matrix} &  \widetilde{X}  & \cr
                   {\,}^{f_1}\!\! \swarrow
                   & &  {\searrow}^{\!f_2}  \cr
                   X^- & & {\ \ \ \ X^+}  \cr
                   {\,}_{\pi_1} \!\! \searrow
                   & & {\swarrow}_{\!\pi_2}\cr
                  & X & \end{matrix} \text{ ,} \]
      where $\widetilde{X} \ce \widetilde{X}_{x,y,z,w}$ is the blow-up
      of $X$ at the vertex $x=y=z=w=0$, $X^- \ce \widetilde{X}_{x,z}$
      is the blow-up of $X$ along $x=z=0$ and $X^+ \ce
      \widetilde{X}_{y,w}$ is the blow-up of $X$ along $y=w=0$. The
      \emph{basic flop} is the transformation from $X^-$ to $X^+$.

      By Theorem \ref{thm.alg} holomorphic bundles on $W_1$ are
      algebraic, and this has the nice consequence that $\chi$ is
      finite over $W_1$.

\item $W_2 \ce \Tot\bigl(\op(-2) \oplus \op(0)\bigr)$

      Note that $W_2 \cong Z_2 \times \mathbb{C}$, and we can contract
      the zero section on the first factor, thus obtaining a singular
      family $X_2 \times \mathbb{C}$, where $X_2$ is the surface
      containing an ordinary double-point singularity defined by
      $xy-z^2=0$ in $\mathbb{C}^3$. Holomorphic bundles on $W_2$ have
      infinite $\chi$, but $h'$ and $w'$ take finite values on
      $W_2$.

      Note that in contrast to $W_1$, there are strictly holomorphic
      (not algebraic) bundles on $W_2$, although every rank-$2$ bundle
      on $W_2$ is still an extension of line bundles.

\item $W_3 \ce \Tot\bigl(\op(-3) \oplus \op(+1)\bigr)$

      Here not even a partial contraction of the zero section is
      possible, but nevertheless we can still calculate partial
      $h'$ and $w'$.

      Again, on $W_3$ there are strictly holomorphic (not algebraic)
      bundles, and moreover, there are (many) rank-$2$ bundles which
      are not extensions of line bundles.
\end{enumerate}

Given a rank-$2$ bundle $E$ on $W_i$ with vanishing first Chern class,
its restriction to the zero section $Z$ determines an integer $j \geq 0$
called the \emph{splitting type} of $E$, such that $E\rvert_{Z} \cong
\op(j) \oplus \op(-j)$. We consider bundles $E$ that are extensions of
line bundles
\[ 0 \longrightarrow \mathcal{O}\bigl(-j\bigr) \longrightarrow E
     \longrightarrow \mathcal{O}\bigl(j\bigr) \longrightarrow 0 \text{ .} \]
Such a bundle is uniquely determined by $j$ and an extension class $p$.
We tabulate the values of $\chi$, $h'$, $w'$.

\begin{center}
\begin{tabular}{lccccccccc}\toprule
 & & $W_1$ & & &$W_2$ & & & $W_3$ \\
 & \hspace*{1em}$\chi$\hspace*{.7em} & \hspace*{.7em}$h'$\hspace*{.7em} & \hspace*{.7em}$w'$\hspace*{1em}
 & \hspace*{1em}$\chi$\hspace*{.7em} & \hspace*{.7em}$h'$\hspace*{.7em} & \hspace*{.7em}$w'$\hspace*{1em}
 & \hspace*{1em}$\chi$\hspace*{.7em} & \hspace*{.7em}$h'$\hspace*{.7em} & \hspace*{.7em}$w'$\hspace*{1em} \\\midrule
\small $j=0$, any $p$ &    $0$      & $0$ & $0$ & $0$      & $0$      & $0$ & $\infty$ & $0$ & $0$ \\
\small $j=1$, any $p$ &    $0$      & $0$ & $1$ & $0$      & $0$      & $0$ & $\infty$ & $0$ & $0$ \\
\small $j \geq 2$, $p=0$ & $f_0(j)$ & $f_1(j)$ & $g_1(j)$ & $\infty$ & $f_2(j)$ & $g_2(j)$ & $\infty$ & $f_3(j)$ & $g_3(j)$ \\
\small $j=3$, $p=u_1$ &    $3$      & $2$ & $1$ & $\infty$ & $2$      & $1$ & $\infty$ & $2$ & $1$ \\
\small $j=4$, $p=z u_1$     & $6$   & $3$ & $1$ & $\infty$ & $3$      & $0$ & $\infty$ & $3$ & $0$ \\
\small $j=4$, $p=z^3 u_1$   & $7$   & $4$ & $6$ & $\infty$ & $3$      & $2$ & $\infty$ & $3$ & $1$ \\
\small $j=4$, $p=z^3 u_1^2$ & $9$   & $5$ & $6$ & $\infty$ & $4$      & $2$ & $\infty$ & $3$ & $2$ \\
\small $j=5$, $p=z u_1$     & $10$  & $4$ & $1$ & $\infty$ & $4$      & $0$ & $\infty$ & $4$ & $0$ \\
\small $j=5$, $p=z^3 u_1$   & $11$  & $5$ & $6$ & $\infty$ & $4$      & $2$ & $\infty$ & $4$ & $1$ \\
\small $j=5$, $p=z^3 u_1^2$ & $16$  & $7$ & $6$ & $\infty$ & $6$      & $2$ & $\infty$ & $5$ & $2$ \\
\bottomrule
\end{tabular}
\end{center}
The formul\ae{} for the split bundles are as follows:
\begin{align*}
  f_0(j) &= \bigl(j^3-j\bigr)\bigl/6 &
  g_1(j) &= \bigl(j^2+j\bigr)\bigl/2 \\
  f_1(j) &= \bigl(j^2-j\bigr)\bigl/2 &
  g_2(j) &= \left\lfloor\textstyle\frac{j}{2}\right\rfloor j - \left\lfloor\textstyle\frac{j}{2}\right\rfloor^2 \\
  f_2(j) &= \left\lfloor \textstyle \frac{j}{2} \right\rfloor \left(j - \left\lfloor \textstyle \frac{j}{2} \right\rfloor \right) &
  g_3(j) &= \textstyle \frac{1}{2} \left\lfloor\textstyle\frac{j}{3}\right\rfloor\left(2j-1-3\left\lfloor\textstyle\frac{j}{3}\right\rfloor\right) \\
  f_3(j) &= \textstyle \frac{1}{2}\left\lfloor \textstyle \frac{j+1}{3} \right\rfloor \left( 2j+1 - 3\left\lfloor \textstyle \frac{j+1}{3} \right\rfloor \right)
\end{align*}

\section{Neighbourhoods of \texorpdfstring{$\mathbb{P}^{\bm 1}$}{P1}-chains}\label{sec.chainsp1}

Fix an integer $a>0$. Since the case $a=1$ was studied in Section
\ref{sec.P1}, we will often silently assume $a \geq 2$. Let $Y_a$ be
the nodal and connected projective curve with $a$ irreducible
components, say $T_1, \dotsc, T_a$, such that $T_i \cong \mathbb{P}^1$
for all $i$ and $T_i \cap T_j = \varnothing$ if $\abs{i-j} \geq 2$,
and $\# (T_j \cap T_{j+1}) = 1$ for all $j=1, \dotsc, a-1$. Notice
that the plurigenus is $p_a(Y_a) = 0$. Let $W$ be an $n$-dimensional
complex manifold (or smooth algebraic variety) and $Z_a \subset W$ a
compact complex subspace such that $Z_a \cong Y_a$. Abusing notation,
we will also call $T_i$ the irreducible components of $Z_a$. Since
$Z_a$ is a local complete intersection in $W$, its conormal sheaf
$N_{Z_a,W}^*$ is locally free.

Every vector bundle on $Y_a$ is isomorphic to a direct sum of line
bundles. Hence set $N_{Z_a,W}^* \cong L_1 \oplus \dotsb \oplus L_{n-1}$
with $L_j \in \Pic Z_a$. For each $1 \leq j \leq n-1$ and each $1 \leq
i \leq a$ set
\[ b_{i,j} \ce \deg (L_j\rvert_{T_i}) \]
and set
\[ b_i \ce \min _{1 \leq j \leq n-1} b_{i,j} \text{ .} \]
The first set of integers may depend upon a choice of the ordering $L_1,
\dotsc, L_{n-1}$ of the indecomposable factors of $N_{Z_a,W}^*$. The
bundle $N_{Z_a,W}^*$ is ample if and only if each $L_j$ is ample, i.e.\
if and only if each $L_j \rvert_{T_i}$ is ample, i.e.\ if and only if
$b_{i,j} > 0$ for all $1 \leq j \leq n-1$ and all $1 \leq i \leq
a$. Notice that the case $b_{i,j} = 1$ for all $i, j$ is the natural
generalisation of Section \ref{sec.P1}. From now on we will assume the
ampleness of $N_{Z_a,W}^*$.

\begin{remark}\label{rem.3.1}
We have $\Pic Z_a \cong \mathbb{Z}^{\oplus a}$. More precisely, for
every ordered set $(d_1, \dotsc, d_a)$ of $a$ integers there is (up to
isomorphisms) a unique line bundle $L$ on $Z_a$ such that
$\deg(L\rvert_{T_i}) = d_i$ for all $i$. Then $L$ is ample if and only
if it is very ample if and only if $d_i > 0$ for all $i$, and $L$ is
spanned if and only if $d_i \geq 0$ for all $i$. Furthermore, if $L$
is spanned, then $h^1\bigl(Z_a; L\bigr) = 0$ and hence $h^0\bigl(Z_a;
L\bigr) = 1 + \sum _{i=1}^{a} d_i$ (Riemann-Roch).
\end{remark}
Set $\vdeg(L) \ce \bigl(\deg(L\rvert_{T_1}), \dotsc,
\deg(L\rvert_{T_a})\bigr) \in \mathbb{Z}^{\oplus a}$.

\begin{remark}\label{rem.3.2}
Since $\dim Z_a = 1$, $h^2\bigl(Z_a; \mathcal{F}\bigr) = 0$ for every
coherent sheaf $\mathcal{F}$ on $Z_a$. We have $h^1\bigl(Z_a; L\bigr)
= 0$ for every line bundle on $Z_a$ such that $\deg(L\rvert_{T_j})
\geq 0$ for at least $a-1$ indices $j \in \{1, \dotsc, a\}$ and
$\deg(L\rvert_{T_i}) \geq -1$ for all $i$ (use $a-1$ Mayer-Vietoris
exact sequences and the cohomology of line bundles on $\mathbb{P}^1$).
\end{remark}

\begin{lemma}\label{lem.pic-chain}
$\Pic \widetilde{Z_a} \equiv \Pic \widehat{Z_a} \cong
\mathbb{Z}^{\oplus a}$. More precisely, every line bundle $L$ on
$\widetilde{Z_a}$ or $\widehat{Z_a}$ is uniquely determined by the
ordered set of a integers $\vdeg(L\rvert_{Z_a})$.
\end{lemma}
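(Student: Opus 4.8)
The plan is to mirror the proof of Lemma~\ref{lem.pic}, replacing the cohomological input for $\mathbb{P}^1$ by the vanishing statement for the nodal chain $Z_a$ supplied by Remark~\ref{rem.3.2}. As there, it suffices to establish two things: first, that for every $i \geq 0$ the restriction map $\rho_i \colon \Pic Z_a^{(i+1)} \to \Pic Z_a^{(i)}$ is bijective, so that $\Pic \widehat{Z_a} = \varprojlim_i \Pic Z_a^{(i)} \cong \Pic Z_a^{(0)} = \Pic Z_a$; and second, that the natural map $\Pic \widetilde{Z_a} \to \Pic \widehat{Z_a}$ is bijective, which follows from Grothendieck's existence theorem (\cite[5.1.4]{gro-EGAIII1}) exactly as in Lemma~\ref{lem.pic}, since $Z_a$ is projective. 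Combined with Remark~\ref{rem.3.1}, which identifies $\Pic Z_a \cong \mathbb{Z}^{\oplus a}$ via $L \mapsto \vdeg(L)$, this yields the claim, including the asserted uniqueness.

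The bijectivity of $\rho_i$ I would extract from the exponential-type exact sequence of abelian sheaves on $Z_a$,
\[ 0 \longrightarrow S^{i+1}(N_{Z_a,W}^*) \longrightarrow \mathcal{O}_{Z_a^{(i+1)}}^* \longrightarrow \mathcal{O}_{Z_a^{(i)}}^* \longrightarrow 0 \text{ ,} \]
where the left-hand term is embedded via $s \mapsto 1+s$; this is legitimate because $Z_a$ is a local complete intersection, so $\mathcal{I}^{i+1}\bigl/\mathcal{I}^{i+2} \cong S^{i+1}(N_{Z_a,W}^*)$, and this ideal layer squares to zero in $\mathcal{O}_{Z_a^{(i+1)}}$. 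The associated long exact sequence reads
\[ H^1\bigl(Z_a; S^{i+1}(N^*)\bigr) \longrightarrow \Pic Z_a^{(i+1)} \xrightarrow{\ \rho_i\ } \Pic Z_a^{(i)} \longrightarrow H^2\bigl(Z_a; S^{i+1}(N^*)\bigr) \text{ .} \]
Since $\dim Z_a = 1$, the last group vanishes (Remark~\ref{rem.3.2}), giving surjectivity; injectivity then reduces to the vanishing $H^1\bigl(Z_a; S^{i+1}(N^*)\bigr) = 0$.

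This last vanishing is the only place where the reducibility of $Z_a$ genuinely enters, and is the step I expect to require the most care. Writing $N_{Z_a,W}^* \cong L_1 \oplus \dots \oplus L_{n-1}$ as in the setup, the symmetric power $S^{m}(N^*)$ splits as a direct sum of line bundles $L_1^{\otimes m_1} \otimes \dots \otimes L_{n-1}^{\otimes m_{n-1}}$ over all $(m_1,\dots,m_{n-1})$ with $\sum_j m_j = m$. For $m \geq 1$ and each component $T_k$, such a summand has degree $\sum_j m_j b_{k,j}$ on $T_k$; since ampleness of $N_{Z_a,W}^*$ forces $b_{k,j} > 0$ for all $k,j$ and $\sum_j m_j = m \geq 1$, this degree is at least $1$ on \emph{every} component. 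By Remark~\ref{rem.3.2}, whose hypotheses are then satisfied, each such line bundle has vanishing $H^1$, and hence $H^1\bigl(Z_a; S^m(N^*)\bigr) = 0$ for all $m \geq 1$; applying this with $m = i+1$ completes the argument. The essential point is that, unlike the $\mathbb{P}^1$ case, one cannot read off $h^1$ from a single copy of $\mathbb{P}^1$: instead the Mayer--Vietoris computation built into Remark~\ref{rem.3.2} is what guarantees the vanishing, and its degree hypotheses are met precisely because ampleness is positivity on each component.
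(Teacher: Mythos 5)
Your proposal is correct and follows the same route as the paper: bijectivity of each restriction map $\rho_i \colon \Pic Z_a^{(i+1)} \to \Pic Z_a^{(i)}$ via the vanishing of $H^1$ and $H^2$ of the symmetric powers of the conormal bundle (the paper delegates this to Remark~\ref{rem.3.2}, which you correctly unpack by splitting $S^m(N_{Z_a,W}^*)$ into line-bundle summands of positive degree on every component), followed by Grothendieck's existence theorem for $\Pic \widetilde{Z_a} \to \Pic \widehat{Z_a}$. Your explicit use of the truncated exponential sequence and the index $i+1$ on the ideal layer is a careful spelling-out of what the paper leaves implicit.
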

\begin{proof}
To check the result for $\widehat {Z_a}$ it is sufficient to show that
for all integers $i \geq 0$ the restriction map $\rho_i \colon \Pic
Z_a^{(i+1)} \to \Pic Z_a^{(i)}$ is bijective. The map $\rho_i$ is
injective (resp.\ surjective), because $H^1\bigl(Z_a;
S^i(N_{Z_a,W}^*)\bigr) = 0$ (resp.\ $H^2\bigl(Z_a;
S^i(N_{Z_a,W}^*)\bigr) = 0$) (Remark \ref{rem.3.2}). The restriction
map $\Pic \widetilde{Z_a} \to \Pic \widehat{Z_a}$ is bijective by
Grothendieck's existence theorem (\cite[5.1.4]{gro-EGAIII1}).
\end{proof}

\paragraph{Notation:}
For each rank-$r$ vector bundle $F$ on $Z_a$ let $a_{i,1}(F) \geq
\cdots \geq a_{i,r}(F)$ be the splitting type of $F\rvert_{T_i}$. Set
$\epsilon_i(F) \ce a_{i,1}-a_{1,r}$, $1 \leq i \leq a$. For each
rank-$r$ vector bundle $E$ on $\widehat{Z_a}$ set $a_{i,h}(E) \ce
a_{i,h}(E\rvert_{Z_a})$, $1 \leq i \leq a$, $1 \leq h \leq r$, and
$\epsilon_i(E) \ce \epsilon_i(E\rvert_{Z_a}) = a_{i,1}(E) -
a_{i,r}(E)$, $1 \leq i \leq a$.

\paragraph{Notation:}
For every integer $i$ such that $1 \leq i \leq a$ set $T[i] \ce T_1
\cup T_i$. For each $1 \leq i \leq n-1$ set $P_i \ce T_i \cap
T_{i+1}$. Set $T[0] \ce \varnothing$.

\begin{lemma}\label{lem.a4}
Let $F$ be a rank-$r$ vector bundle on $Z_a$ such that $a_{i,r} \geq
-1$ for all $i$ and $a_{i,r} \geq 0$ for at least $a-1$ indices. Then
$h^1\bigl(Z_a; F\bigr) = 0$. If $a_{i,r} \geq 0$ for all $i$, then $F$
is spanned.
\end{lemma}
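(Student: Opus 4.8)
The plan is to reduce the rank-$r$ assertion to the line-bundle case, which is already settled in Remarks \ref{rem.3.1} and \ref{rem.3.2}. The essential structural input is that every vector bundle on $Y_a \cong Z_a$ splits as a direct sum of line bundles, as noted at the start of this section. Accordingly, I would first write $F \cong \bigoplus_{h=1}^{r} M_h$ with each $M_h \in \Pic Z_a$, and then express the cohomology and spanning properties of $F$ through those of its summands.

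The next step is to track how the splitting hypothesis on $F$ descends to the individual $M_h$. By definition the splitting type $a_{i,1} \geq \dotsb \geq a_{i,r}$ of $F\rvert_{T_i}$ is exactly the multiset $\{\deg(M_h\rvert_{T_i})\}_{h=1}^{r}$ arranged in decreasing order, so that $a_{i,r} = \min_{h} \deg(M_h\rvert_{T_i})$. Hence $\deg(M_h\rvert_{T_i}) \geq a_{i,r}$ for every $h$ and every $i$, and the hypothesis ``$a_{i,r} \geq -1$ for all $i$ and $a_{i,r} \geq 0$ for at least $a-1$ indices'' passes verbatim to each line bundle $M_h$: one obtains $\deg(M_h\rvert_{T_i}) \geq -1$ for all $i$, with $\deg(M_h\rvert_{T_i}) \geq 0$ for at least $a-1$ of the indices $i$.

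For the vanishing statement I would then invoke Remark \ref{rem.3.2}, which yields $h^1(Z_a; M_h) = 0$ under precisely this degree condition. Since cohomology commutes with finite direct sums, $h^1(Z_a; F) = \sum_{h=1}^{r} h^1(Z_a; M_h) = 0$. For the spanning statement, under the stronger hypothesis $a_{i,r} \geq 0$ for all $i$ each summand satisfies $\deg(M_h\rvert_{T_i}) \geq 0$ for all $i$, so Remark \ref{rem.3.1} shows every $M_h$ is spanned; a finite direct sum of spanned sheaves is spanned (its global sections are the direct sums of the global sections of the factors, and these generate stalkwise), whence $F$ is spanned.

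Once the splitting of $F$ is in hand the argument is essentially bookkeeping, so I do not expect a serious obstacle. The only point requiring a moment's care is the translation of the hypothesis to the summands, namely the identification of $a_{i,r}$ with $\min_{h}\deg(M_h\rvert_{T_i})$; but this is immediate from the definition of the splitting type, and everything else follows by additivity of $H^1$ and of global sections over the direct-sum decomposition.
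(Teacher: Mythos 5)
Your argument is correct, but it takes a genuinely different route from the paper's. The paper proves the lemma directly for an arbitrary rank-$r$ bundle $F$ by induction along the subchains $T[i] = T_1 \cup \dotsb \cup T_i$, using the Mayer--Vietoris sequence $0 \to F\rvert_{T[i+1]} \to F\rvert_{T[i]} \oplus F\rvert_{T_{i+1}} \to F\rvert_{\{P_i\}} \to 0$ and checking at each step that the evaluation map at the node $P_i$ is surjective; this is where the ``at least $a-1$ indices'' hypothesis enters, since on the one component where $a_{i,r}$ may equal $-1$ the surjectivity is supplied by spannedness of $F\rvert_{T[i]}$ (from the inductive hypothesis) rather than of $F\rvert_{T_{i+1}}$. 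You instead invoke the global splitting $F \cong \bigoplus_h M_h$ into line bundles --- a fact the paper asserts at the start of Section \ref{sec.chainsp1} and already uses for $N_{Z_a,W}^*$ --- and reduce to Remarks \ref{rem.3.1} and \ref{rem.3.2}. Your translation of the hypotheses to the summands via $a_{i,r} = \min_h \deg(M_h\rvert_{T_i})$ is sound (it rests on the uniqueness of the splitting type on each $T_i \cong \mathbb{P}^1$), and additivity of $H^1$ and of spannedness over finite direct sums finishes the job. Your route is shorter granted the splitting theorem for bundles on a chain of $\mathbb{P}^1$'s, which holds because $Y_a$ is a tree of rational curves; the paper's Mayer--Vietoris induction works for an arbitrary vector bundle without appealing to that global decomposition (which would fail, for instance, on a cycle of $\mathbb{P}^1$'s), so it is the more robust argument, while in the end both proofs bottom out in the same component-by-component cohomology computation --- the Remarks you cite are themselves proved by the identical Mayer--Vietoris device at the line-bundle level.
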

\begin{proof}
Notice that $a_{i,r} \geq -1$ if and only if $h^1\bigl(T_i;
F\rvert_{T_i}\bigr) = 0$ and that $a_{i,r} \geq 0$ if and only if
$F\rvert_{T_i}$ is spanned. In particular $h^1\bigl(T_1;
F\rvert_{T_1}\bigr) = 0$. Hence we may assume $a \geq 2$. Fix an
integer $1 \leq i < a$ and assume $h^1\bigl(T[i]; F\rvert_{T[i]}\bigr)
= 0$. To get the first assertion of the Lemma by induction on $i$ it
is sufficient to prove $h^1\bigl(T[i+1]; F\rvert_{T[i+1]}\bigr) =
0$. To get the second assertion we also need to prove that if
$F\rvert_{T[i]}$ and $F\rvert_{T_i}$ are spanned, then
$F\rvert_{T[i+1]}$ is spanned. Consider the Mayer-Vietoris exact
sequence
\begin{equation}\label{eqa1}
  0 \longrightarrow  F\rvert_{T[i+1]} \longrightarrow F\rvert_{T[i]} \oplus
  F\rvert_{T_{i+1}} \longrightarrow F\rvert_{\{P_i\}} \longrightarrow 0 \text{ .}
\end{equation}
By assumption, $h^1\bigl(T[i]; F\rvert_{T[i]}\bigr) =
h^1\bigl(T_{i+1}; F\rvert_{T_{i+1}}\bigr) = 0$. If $a_{i+1} \geq 0$,
then $F\rvert_{T_i}$ is spanned, and hence the restriction map
\[ \eta \colon H^0\bigl(T[i]; F\rvert_{T[i]}\bigr) \oplus H^0\bigl(T_{i+1}; F\rvert_{T_{i+1}}) \longrightarrow F\rvert_{\{P_i\}} \]
is surjective. The cohomology exact sequence of \eqref{eqa1} gives
$h^1\bigl(T[i+1]; F\rvert_{T[i+1]}\bigr) = 0$. If $a_{i+1, r} = -1$,
then the inductive assumption gives that $F\rvert_{T[i]}$ is
spanned. Hence the first component of $\eta$ is surjective, and
consequently so is $\eta$, and again we obtain $h^1\bigl(T[i+1];
F\rvert_{T[i+1]}\bigr) = 0$. Similarly, fixing any $Q \in T[i+1]$ we
see that if both $F\rvert_{T[i]}$ and $F\rvert_{T_{i+1}}$ are spanned,
then $F\rvert_{T[i+1]}$ is spanned, concluding the inductive proof.
\end{proof}

\begin{lemma}\label{a5}
Let $E$ be a rank-$r$ vector bundle on $\widehat{Z_a}$. Assume that
$b_i \geq \epsilon_i(E) - 1$ for all $1 \leq i \leq a$, and assume the
existence of $b \in \{1, \dotsc, a\}$ such that $b_i \geq
\epsilon_i(E)$ for all $i \in \{1, \dotsc, a\} \backslash \{b\}$ and
all $1 \leq j \leq n-1$. Then the natural restriction map $\rho_E
\colon H^1\bigl(\widehat{Z_a}; E\bigr) \to H^1\bigl(Z_a;
E\rvert_{Z_a})$ is bijective.
\end{lemma}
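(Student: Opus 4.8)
The plan is to realise $\rho_E$ as the bottom projection of an inverse system of cohomology groups over the infinitesimal neighbourhoods and to show that every transition map in that system is an isomorphism. Since $\dim Z_a = 1$, we have $H^2\bigl(Z_a; \mathcal F\bigr) = 0$ for every coherent sheaf $\mathcal F$ (Remark \ref{rem.3.2}), so the restriction maps $H^1\bigl(Z_a^{(m)}; E\rvert_{Z_a^{(m)}}\bigr) \to H^1\bigl(Z_a^{(m-1)}; E\rvert_{Z_a^{(m-1)}}\bigr)$ are surjective. Surjectivity makes the system Mittag--Leffler, so $\varprojlim^{1}=0$ and $H^1\bigl(\widehat{Z_a}; E\bigr) = \varprojlim_m H^1\bigl(Z_a^{(m)}; E\rvert_{Z_a^{(m)}}\bigr)$, with $\rho_E$ the projection onto the $m=0$ term. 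Hence $\rho_E$ is automatically surjective, and it is an isomorphism as soon as every transition map is injective.

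To analyse a single transition map I would tensor the conormal sequence $0 \to S^m(N_{Z_a,W}^*) \to \mathcal O_{Z_a^{(m)}} \to \mathcal O_{Z_a^{(m-1)}} \to 0$ with the locally free sheaf $E$, obtaining
\[ 0 \longrightarrow E\rvert_{Z_a} \otimes S^m(N_{Z_a,W}^*) \longrightarrow E\rvert_{Z_a^{(m)}} \longrightarrow E\rvert_{Z_a^{(m-1)}} \longrightarrow 0 \text{ .} \]
Its long exact sequence, together with $H^2\bigl(Z_a; E\rvert_{Z_a}\otimes S^m(N_{Z_a,W}^*)\bigr)=0$, shows that the kernel of the $m$-th transition map is a quotient of $H^1\bigl(Z_a; E\rvert_{Z_a}\otimes S^m(N_{Z_a,W}^*)\bigr)$. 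The whole statement therefore reduces to the vanishing
\[ H^1\bigl(Z_a; E\rvert_{Z_a}\otimes S^m(N_{Z_a,W}^*)\bigr) = 0 \quad\text{for all } m \geq 1 \text{ .} \]

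This vanishing I would read off from Lemma \ref{lem.a4}, applied to $E\rvert_{Z_a}\otimes S^m(N_{Z_a,W}^*)$ on the chain $Z_a$; the only input required is the least entry of its splitting type on each component $T_i$. Since $N_{Z_a,W}^*\rvert_{T_i}=\bigoplus_j \mathcal O(b_{i,j})$ with every $b_{i,j}>0$, the smallest summand of $S^m(N_{Z_a,W}^*)\rvert_{T_i}$ occurs in degree $m b_i$ (put all the weight on a factor of minimal degree $b_i$), while the splitting type of $E\rvert_{T_i}$ spreads over an interval of length $\epsilon_i(E)=a_{i,1}(E)-a_{i,r}(E)$. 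Tracking these two contributions, the two numerical conditions of Lemma \ref{lem.a4}---least twisted degree $\geq -1$ on every $T_i$, and $\geq 0$ on at least $a-1$ of them---become the inequalities $m b_i \geq \epsilon_i(E)-1$ for all $i$ and $m b_i \geq \epsilon_i(E)$ for $i\neq b$. For $m\geq 1$ these are exactly the two hypotheses, so Lemma \ref{lem.a4} gives the vanishing, hence the bijectivity of every transition map, and with it of $\rho_E$.

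The step I expect to be most delicate is this degree bookkeeping. One must verify that the minimal summand of $S^m(N_{Z_a,W}^*)\rvert_{T_i}$ genuinely sits in degree $m b_i$ for every $m$, so that the single inequality at $m=1$ propagates to all $m\geq 1$, and one must push the resulting componentwise bounds through the Mayer--Vietoris gluing along the nodes $P_i=T_i\cap T_{i+1}$ that underlies Lemma \ref{lem.a4}---this is exactly where the freedom of a single exceptional index $b$, carrying only the weaker bound $b_i\geq\epsilon_i(E)-1$, is consumed. By contrast, the inverse-limit identification of the first paragraph is routine once $H^2=0$, though it deserves to be stated with care because $H^1\bigl(\widehat{Z_a};E\bigr)$ is in general infinite-dimensional.
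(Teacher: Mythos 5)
Your proof follows essentially the same route as the paper's: surjectivity of $\rho_E$ from $h^2=0$ on the graded pieces (the paper simply cites Remark \ref{rem.dim1}), reduction of injectivity to the vanishing $H^1\bigl(Z_a; S^m(N_{Z_a,W}^*)\otimes E\rvert_{Z_a}\bigr)=0$ for all $m\geq 1$, and an application of Lemma \ref{lem.a4} to this bundle, whose minimal splitting entry on $T_i$ is $mb_i+a_{i,r}(E)$. Note only that your final bookkeeping identifies $-a_{i,r}(E)$ with $\epsilon_i(E)$ (i.e.\ tacitly takes $a_{i,1}(E)=0$, as is harmless for the $\SHom$-type bundles to which the lemma is later applied), but the paper's own proof makes exactly the same move, so your argument matches it.
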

\begin{proof}
The surjectivity of $\rho_E$ comes just from Remark \ref{rem.dim1}, it
uses only that $\dim Z_a = 1$ and that $X = Z_a$ is reduced and
a local complete intersection in $W$. To check the injectivity of
$\rho_E$ it is sufficient to prove $H^1\bigl(Z_a; S^m(N_{Z_a,W}^*)
\otimes (E\rvert_{Z_a}) \bigr) = 0$ for all $m \geq 1$. Fix an integer
$m \geq 1$ and set $F \ce S^m(N_{Z_a,W}^*) \otimes (E\rvert_{Z_a})$
and $t \ce \binom{n+m-1}{n-1} r$. Notice that $t = \rk F$.  Notice
that $a_{i,t}(F) = mb_i+a_{i,r}$. Since $m \geq 1$, our assumptions
give $a_{i,t}(F) \geq -1$ for all $i$ and $a_{i,t}(F) \geq 0$ for at
least $a-1$ indices $i$. Apply Lemma \ref{lem.a4}.
\end{proof}

\begin{proposition}\label{a6}
Fix any rank-$r$ vector bundle $E$ on $\widehat{Z_a}$ and a
decomposition $E\rvert_{Z_a} \cong L_1 \oplus \dotsb \oplus L_r$ with
$L_h \in \Pic Z_a$, $1 \leq h \leq r$. Let $\widehat{L}_h$ denote the
only line bundle on $\widehat{Z_a}$ such that
$\widehat{L}_h\rvert_{Z_a} \cong L_h$ (Lemma
\ref{lem.pic-chain}). Assume $b_i \geq 2 \epsilon_i(E) - 1$ for all
$i$ and $b_i \geq 2 \epsilon_i(E)$ for at least $a-1$ indices $i$.
Then $E \cong \widehat{L}_1 \oplus \dotsb \oplus \widehat {L}_r$.
\end{proposition}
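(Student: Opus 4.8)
The plan is to prove the splitting of $E$ by induction on the rank $r$, using the filtrability result (Lemma~\ref{lem.extension2} extended to the chain $Z_a$, or its direct analogue here) to realise $E$ as an iterated extension, and then showing that under the numerical hypotheses all the relevant extension classes vanish. The key point is that a bundle $E$ on $\widehat{Z_a}$ with $E\rvert_{Z_a} \cong L_1 \oplus \dotsb \oplus L_r$ splits as $\widehat{L}_1 \oplus \dotsb \oplus \widehat{L}_r$ precisely when the obstructions living in the groups $\Ext^1\bigl(\widehat{L}_h, \widehat{L}_k\bigr) = H^1\bigl(\widehat{Z_a}; \widehat{L}_k \otimes \widehat{L}_h^{-1}\bigr)$ all vanish for the appropriate pairs. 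So the strategy reduces to a cohomology vanishing computation on the formal neighbourhood, which by Lemma~\ref{a5} can be transferred to a computation on $Z_a$ itself.

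\textbf{First} I would order the factors so that $\deg(L_h\rvert_{T_i})$ is non-increasing in $h$ on each component (compatibly with the splitting type $a_{i,1} \geq \dotsb \geq a_{i,r}$), and extract from $E$ a sub-line-bundle $\widehat{L}_1$ mapping to the top quotient, exactly as in the proof of Lemma~\ref{lem.extension2}. The induction hypothesis then applies to $E\bigl/\widehat{L}_1$, giving a splitting of the quotient; it remains to show the extension
\[ 0 \longrightarrow \widehat{L}_1 \longrightarrow E \longrightarrow \bigoplus_{h=2}^{r}\widehat{L}_h \longrightarrow 0 \]
is trivial, i.e.\ that $\Ext^1\bigl(\widehat{L}_h, \widehat{L}_1\bigr) = 0$ for each $h$ (and symmetrically for the other orderings needed to conclude the full direct-sum decomposition). \textbf{Second}, I would apply Lemma~\ref{a5} to the rank-one bundle $\widehat{L}_k \otimes \widehat{L}_h^{-1}$: its restriction to $Z_a$ has degree $\deg(L_k\rvert_{T_i}) - \deg(L_h\rvert_{T_i})$ on $T_i$, whose absolute value is bounded by $\epsilon_i(E) = a_{i,1} - a_{i,r}$. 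The hypothesis $b_i \geq 2\epsilon_i(E) - 1$ (resp.\ $b_i \geq 2\epsilon_i(E)$ for $a-1$ indices) is exactly what is needed to verify the numerical conditions $b_i \geq \epsilon_i\bigl(\widehat{L}_k \otimes \widehat{L}_h^{-1}\bigr) - 1$ of Lemma~\ref{a5} for the line bundle in question, so that $H^1\bigl(\widehat{Z_a}; \widehat{L}_k \otimes \widehat{L}_h^{-1}\bigr) \cong H^1\bigl(Z_a; (L_k \otimes L_h^{-1})\rvert_{Z_a}\bigr)$.

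\textbf{Finally}, the right-hand group is computed on $Z_a$ by Lemma~\ref{lem.a4} (or Remark~\ref{rem.3.2}): since the degrees $\deg(L_k\rvert_{T_i}) - \deg(L_h\rvert_{T_i})$ satisfy the $\geq -1$ and $\geq 0$ sign conditions guaranteed by the doubled bound on $b_i$ absorbing the spread $\epsilon_i$, we conclude $h^1 = 0$ and the extension splits. \textbf{The main obstacle} I anticipate is bookkeeping: unlike the $\mathbb{P}^1$ case of Proposition~\ref{prop.split}, the relevant degrees vary from component to component of the chain, and one must check that a \emph{single} factor can be split off whose complementary quotient still satisfies the inductive hypotheses — in particular that $\epsilon_i(E\bigl/\widehat{L}_1) \leq \epsilon_i(E)$ on every $T_i$, so that the numerical conditions propagate through the induction. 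Verifying that the chosen ordering makes $\widehat{L}_1$ a genuine sub\emph{bundle} (locally free cokernel on all of $\widehat{Z_a}$, not merely on $Z_a$) and that the spread does not increase under passage to the quotient is the delicate part; the cohomology vanishing itself is then routine given Lemmas~\ref{lem.a4} and~\ref{a5}.
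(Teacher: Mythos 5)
Your strategy is genuinely different from the paper's: the paper never filters $E$ at all, but instead fixes the given isomorphism $\alpha_0 \colon E\rvert_{Z_a} \to L_1 \oplus \dotsb \oplus L_r$, sets $A = \SHom\bigl(E, \widehat{L}_1 \oplus \dotsb \oplus \widehat{L}_r\bigr)$, and lifts $\alpha_0$ order by order through the infinitesimal neighbourhoods $Z_a^{(m)}$; the obstruction at each stage lies in $H^1\bigl(Z_a; S^m(N_{Z_a,W}^*) \otimes A\rvert_{Z_a}\bigr)$, which vanishes by the proof of Lemma \ref{a5} because $\epsilon_i(A) = 2\epsilon_i(E)$. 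That route avoids both of the weak points in your argument: it needs no filtrability statement for the nodal chain (Lemma \ref{lem.extension2} is only proved for \emph{smooth} curves, so you cannot simply invoke "its direct analogue here"; and even granting filtrability, you would still have to show that the sub-line-bundle you extract can be chosen to restrict to the prescribed summand $L_1$, which is itself a lifting problem of the same kind).

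The genuine error is in your final step. The group $H^1\bigl(Z_a; L_k \otimes L_h^{-1}\bigr)$ does \emph{not} vanish in general: $\deg\bigl((L_k \otimes L_h^{-1})\rvert_{T_i}\bigr)$ can be as small as $-\epsilon_i(E)$, hence $\leq -2$ whenever some $\epsilon_i(E) \geq 2$, and the hypothesis on $b_i$ is irrelevant to this sheaf because no symmetric power of $N^*$ appears in it — there is nothing for the "doubled bound on $b_i$" to absorb. What the numerical hypotheses actually give you (via the vanishing $H^1\bigl(Z_a; S^m(N_{Z_a,W}^*) \otimes L_k \otimes L_h^{-1}\bigr) = 0$ for all $m \geq 1$, which needs only $m b_i - \epsilon_i(E) \geq -1$ and the companion condition on $a-1$ indices) is the \emph{injectivity} of the restriction map $H^1\bigl(\widehat{Z_a}; \widehat{L}_k \otimes \widehat{L}_h^{-1}\bigr) \to H^1\bigl(Z_a; L_k \otimes L_h^{-1}\bigr)$, not the vanishing of its target. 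The correct conclusion of your argument is therefore: the extension class of $0 \to \widehat{L}_1 \to E \to E\bigl/\widehat{L}_1 \to 0$ restricts to zero on $Z_a$ (because the given decomposition of $E\rvert_{Z_a}$ splits the restricted sequence, once $\widehat{L}_1\rvert_{Z_a} = L_1$ is arranged to be a direct summand), and injectivity then forces the class itself to vanish. With that repair, and with the filtrability/lifting issues above addressed, your induction does go through; as written, the step "we conclude $h^1 = 0$" is false.
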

\begin{proof}
We fix an isomorphism $\alpha_0 \colon E \to L_1 \oplus \dotsb \oplus
L_n$.  Set $A \ce \SHom(E, \widehat{L}_1 \oplus \dotsb \oplus
\widehat{L}_r)$. It is sufficient to show the existence of $\alpha \in
H^0\bigl(\widehat{Z_a}; A\bigr)$ which induces an isomorphism $E \to
\widehat{L}_1 \oplus \dotsb \oplus \widehat{L}_r$, or, using
Nakayama's Lemma, to prove the existence $\alpha \in
H^0\bigl(\widehat{Z_a}; A\bigr)$ such that $\alpha\rvert_{Z_a} =
\alpha_0$. Equivalently, it is sufficient to show the existence of
\[ \Bigl\{ \alpha_m \in H^0\bigl(Z_a^{(m)}; A\rvert_{Z_a^{(m)}}\bigr) \Bigr\}_{m \geq 1}
   \quad\text{such that}\quad \alpha_m\rvert_{Z^{(m-1)}} = \alpha_{m-1} \text{ .} \]
Fix an integer $m \geq 1$ and assume that $\alpha_0, \dotsc,
\alpha_{m-1}$ have been constructed as above. The obstruction to the
existence of $\alpha_m$ is an element of $H^1\bigl(Z_a;
S^m(N_{Z_a,W}^*) \otimes (A\rvert_{Z_a})\bigr)$. Notice that
$\epsilon_i(A) = 2\epsilon_i(E)$ for all $1 \leq i \leq a$, and now
apply Lemma \ref{a5}.
\end{proof}

\begin{remark}\label{a7}
Let $E$ be a vector bundle on $\widehat{Z_a}$. Set $F \ce
E\rvert_{Z_a}$. By Remark \ref{rem.dim1} the cohomology group
$H^1\bigl(\widehat{Z_a}; \SEnd E\bigr)$ is finite-dimensional, and
$H^1\bigl(\widehat{Z_a}; \SEnd E\bigr)$ (resp.\ $H^1\bigl(Z_a; \SEnd
F\bigr)$) is the tangent space to the deformation space of $E$ (resp.\
$F$). Remark \ref{rem.dim1} gives the surjectivity of the restriction
map
\[ \rho_{\SEnd E} \colon H^1\bigl(\widehat{Z_a}; \SEnd E\bigr) \to H^1\bigl(Z_a; \SEnd F\bigr) \text{ .} \]
In Proposition \ref{a81} we will give a sufficient condition for the
injectivity of the map $\rho_{\SEnd E}$. Then in Proposition \ref{a82}
we will give other cases in which we are able to ``partially
control'' $\ker(\rho_{\SEnd E})$.
\end{remark}

The proof of Proposition \ref{a6} gives the following result.

\begin{proposition}\label{a81}
Fix a rank-$r$ vector bundle $E$ on $\widehat{Z_a}$. Assume that $b_i
\geq 2 \epsilon_i(E) - 1$ for all $i$ and assume $b_i \geq 2
\epsilon_i(E)$ for at least $a-1$ indices $i$. Then the restriction
map $\rho_{\SEnd E} \colon H^1\bigl(\widehat{Z_a}; \SEnd E\bigr) \to
H^1\bigl(Z_a; \SEnd(E\rvert_{Z_a})\bigr)$ is bijective.\qed
\end{proposition}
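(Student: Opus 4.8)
The plan is to observe that the statement is nothing other than Lemma \ref{a5} applied to the rank-$r^2$ bundle $\SEnd E$ on $\widehat{Z_a}$ in place of $E$; the only genuine work is to check that the numerical hypotheses of the proposition translate verbatim into those of Lemma \ref{a5} for $\SEnd E$. First I would record that surjectivity of $\rho_{\SEnd E}$ requires nothing new: it was already noted in Remark \ref{a7} that Remark \ref{rem.dim1} yields surjectivity, using only $\dim Z_a = 1$. So the entire content lies in injectivity.

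The key computation is the splitting-type spread of $\SEnd E$ on each component. Writing $F \ce E\rvert_{Z_a}$ and recalling that $F\rvert_{T_i}$ has splitting type $a_{i,1}(E) \geq \dotsb \geq a_{i,r}(E)$, the endomorphism bundle $\SEnd(F\rvert_{T_i}) \cong (F\rvert_{T_i})^\vee \otimes F\rvert_{T_i}$ decomposes as $\bigoplus_{h,k} \mathcal{O}_{T_i}\bigl(a_{i,h}(E) - a_{i,k}(E)\bigr)$. Its top summand has degree $a_{i,1}(E) - a_{i,r}(E) = \epsilon_i(E)$ and its bottom summand has degree $a_{i,r}(E) - a_{i,1}(E) = -\epsilon_i(E)$; the splitting type is thus symmetric about $0$, and in particular
\[ \epsilon_i(\SEnd E) = \epsilon_i(E) - \bigl(-\epsilon_i(E)\bigr) = 2\,\epsilon_i(E) \quad\text{for all } 1 \leq i \leq a \text{ ,} \]
exactly as used in the proof of Proposition \ref{a6} for $A = \SHom(E, \bigoplus_h \widehat L_h)$.

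With this identity in hand the matching is immediate: the two hypotheses $b_i \geq 2\epsilon_i(E) - 1$ for all $i$, and $b_i \geq 2\epsilon_i(E)$ for at least $a-1$ indices, become $b_i \geq \epsilon_i(\SEnd E) - 1$ for all $i$, and $b_i \geq \epsilon_i(\SEnd E)$ for all but one index. These are precisely the conditions Lemma \ref{a5} imposes on its bundle, so Lemma \ref{a5} applied to $\SEnd E$ gives that $\rho_{\SEnd E} \colon H^1\bigl(\widehat{Z_a}; \SEnd E\bigr) \to H^1\bigl(Z_a; \SEnd F\bigr)$ is bijective, which is the claim. The main point to verify carefully --- and the only place the factor $2$ in the hypotheses is consumed --- is the symmetry of the splitting type of $\SEnd F\rvert_{T_i}$: it is this that makes the bottom degree of $S^m(N_{Z_a,W}^*) \otimes \SEnd F$ on $T_i$ equal to $m b_i - \epsilon_i(E)$, so that the hypotheses force it to be $\geq -1$ everywhere and $\geq 0$ for at least $a-1$ indices, whence Lemma \ref{lem.a4} supplies the vanishing $H^1\bigl(Z_a; S^m(N_{Z_a,W}^*) \otimes \SEnd F\bigr) = 0$ for all $m \geq 1$ that underlies injectivity.
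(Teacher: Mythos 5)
Your proposal is correct and is essentially the paper's own argument: the paper proves Proposition \ref{a81} by the remark ``the proof of Proposition \ref{a6} gives the following result,'' which is precisely your reduction --- apply Lemma \ref{a5} to $\SEnd E$ after noting $\epsilon_i(\SEnd E) = 2\epsilon_i(E)$, with surjectivity already supplied by Remark \ref{a7}. Your explicit verification of the symmetric splitting type of $\SEnd(F\rvert_{T_i})$ just spells out the step the paper leaves implicit.
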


\begin{remark}
Fix an integer $m \geq 0$ and a vector bundle $A$ on $\widehat{Z_a}$. Let
\[ \rho_{A,m} \colon H^1\bigl(\widehat{Z_a}; A\bigr) \to
H^1\bigl(Z_a^{(m)}; A\rvert_{Z_a^{(m)}}\bigr) \text{ .} \] Remark
\ref{rem.dim1} gives the surjectivity of $\rho_{A,m}$ and that the map
$\rho_{A,m}$ is injective if
\[ h^1\bigl(Z_a; S^t(N_{Z,W}^*) \otimes A\bigr) = 0 \quad\text{for all \ } t \geq m+1 \text{ .} \]
Since $b_i\bigl(S^t(N_{Z,W}^*)\bigr) = t b_i$, the proof of Lemma
\ref{a5} gives that $\rho_{A,m}$ is injective if $(m+1) b_i \geq
\epsilon_i(A) - 1$ for all $i$ and $(m+1) b_i \geq \epsilon_i(A)$ for
at least $a-1$ indices $i$. Now we fix a vector bundle $E$ on
$\widehat{Z_a}$ and take $A \ce \SEnd(E\rvert_{Z_a})$. Since
$\epsilon_i(\SEnd E) = 2 \epsilon_i(E)$, we get the following result.
\end{remark}

\begin{proposition}\label{a82}
Fix a rank-$r$ vector bundle $E$ on $\widehat{Z_a}$ and an integer $m
\geq 0$. If $(m+1) b_i \geq 2 \epsilon_i(E) - 1$ for all $i$ and
$(m+1) b_i \geq 2 \epsilon_i(E)$ for at least $a-1$ indices $i$, then
the restriction map
\[ \rho_{\SEnd E,m} \colon H^1\bigl(\widehat{Z_a}; \SEnd E\bigr) \to H^1\bigl(Z_a^{(m)}; \SEnd(E\rvert_{Z_a^{(m)}})\bigr) \]
is bijective.\qed
\end{proposition}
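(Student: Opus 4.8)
The plan is to recognise this as the instance $A \ce \SEnd E$ of the general restriction map $\rho_{A,m}$ analysed in the Remark immediately preceding, and then to verify its numerical hypothesis. Since forming endomorphisms commutes with restriction to a closed subspace, $(\SEnd E)\rvert_{Z_a^{(m)}} = \SEnd\bigl(E\rvert_{Z_a^{(m)}}\bigr)$ and $(\SEnd E)\rvert_{Z_a} = \SEnd\bigl(E\rvert_{Z_a}\bigr)$, so $\rho_{\SEnd E,m}$ is literally $\rho_{A,m}$ with $A = \SEnd E$. First I would dispose of surjectivity: by Remark \ref{rem.dim1} (equivalently, the surjectivity already recorded in Remark \ref{a7}) the map is surjective using only that $\dim Z_a = 1$ and that $Z_a$ is a reduced local complete intersection, with no numerical input whatsoever. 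Thus the entire content of the statement is the injectivity of $\rho_{\SEnd E,m}$.

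For injectivity I would filter $\widehat{Z_a}$ by infinitesimal order and read the kernel off the graded pieces, exactly as in the proof of Lemma \ref{a5}. Tensoring the conormal exact sequences $0 \to S^t(N_{Z_a,W}^*) \to \mathcal{O}_{Z_a^{(t)}} \to \mathcal{O}_{Z_a^{(t-1)}} \to 0$ with $\SEnd E$ and chasing cohomology, the obstruction to an element of $H^1\bigl(\widehat{Z_a}; \SEnd E\bigr)$ restricting to zero on $Z_a^{(m)}$ is controlled by the groups $H^1\bigl(Z_a; S^t(N_{Z_a,W}^*) \otimes \SEnd(E\rvert_{Z_a})\bigr)$ for $t \geq m+1$. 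Hence it suffices to show each of these vanishes; injectivity then follows (and this recovers Proposition \ref{a81} as the special case $m=0$, where $Z_a^{(0)} = Z_a$).

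The remaining step is the numerical vanishing, which I would reduce to Lemma \ref{lem.a4}. On each component $T_i$ the bundle $\SEnd(E\rvert_{T_i})$ decomposes as $\bigoplus_{h,k} \mathcal{O}_{T_i}(a_{i,h}-a_{i,k})$, so its splitting degrees lie symmetrically in $[-\epsilon_i(E), \epsilon_i(E)]$; in particular it contains a trivial summand and its least degree is $-\epsilon_i(E)$. Since $S^t(N_{Z_a,W}^*)\rvert_{T_i}$ has least degree $t b_i$, the bottom splitting degree of $S^t(N_{Z_a,W}^*) \otimes \SEnd(E\rvert_{Z_a})$ on $T_i$ is at least $t b_i - \epsilon_i(E)$. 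As $b_i \geq 1$ this is increasing in $t$, so the binding case is $t = m+1$. The hypothesis $(m+1)b_i \geq 2\epsilon_i(E)-1$ then gives bottom degree $\geq \epsilon_i(E)-1 \geq -1$ on every $T_i$, while $(m+1)b_i \geq 2\epsilon_i(E)$ on at least $a-1$ indices gives bottom degree $\geq 0$ there. These are precisely the hypotheses of Lemma \ref{lem.a4}, which yields the required vanishing for all $t \geq m+1$ and completes the argument.

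I expect the main obstacle to be the degree bookkeeping on the nodal chain rather than any conceptual difficulty: one must track the splitting type of $S^t(N_{Z_a,W}^*) \otimes \SEnd E$ component by component and feed it correctly into the two-tier input of Lemma \ref{lem.a4}, where vanishing of $H^1$ is forced on all $T_i$ but spannedness is needed only on $a-1$ of them, the slack coming from the Mayer--Vietoris gluing that tolerates a single degree-$(-1)$ component. The only point requiring care is the appearance of the factor $2$: it is exactly the downward shift $-\epsilon_i(E)$ contributed by the negative summands of $\SEnd E$, i.e.\ $\epsilon_i(\SEnd E) = 2\epsilon_i(E)$, so that the condition of Lemma \ref{a5} applied to $A = \SEnd E$ reads $(m+1)b_i \geq \epsilon_i(\SEnd E)-1 = 2\epsilon_i(E)-1$, as stated.
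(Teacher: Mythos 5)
Your proposal is correct and follows essentially the same route as the paper: the paper's own argument is precisely to specialise the preceding remark's criterion for $\rho_{A,m}$ to $A = \SEnd E$, using the injectivity test $h^1\bigl(Z_a; S^t(N_{Z_a,W}^*) \otimes \SEnd(E\rvert_{Z_a})\bigr) = 0$ for $t \geq m+1$ from the proof of Lemma \ref{a5} (hence Lemma \ref{lem.a4}), together with $\epsilon_i(\SEnd E) = 2\epsilon_i(E)$ and the surjectivity from Remark \ref{rem.dim1}. Your explicit component-by-component degree bookkeeping just fills in details the paper leaves implicit.
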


\section{Neighbourhoods of positive-genus curves}\label{sec.g0}

Let $W$ be an $n$-dimensional complex manifold or an $n$-dimensional
smooth algebraic variety and $Z \subset W$ a closed submanifold which
is a smooth and connected curve of genus $g>0$. \emph{We assume that
the conormal bundle $N_{Z,W}^*$ is ample.}

\begin{remark}\label{rem.semistable}
Let $F$ be a semi-stable vector bundle on $Z$. Since we work in
characteristic zero, $\SEnd F \cong \mathcal{O}_Z \oplus \ad F$, and
$\ad F$ is a semi-stable vector bundle on $Z$ of degree zero
(\cite[Theorem 2.5 or 2.6]{mar1}).
\end{remark}

Let $M(Z; r, d)$ denote the moduli space of all stable vector bundles
on $Z$ with rank $r$ and degree $d$.  If $g \geq 2$, then $M(Z; r, d)$
is a non-empty irreducible variety of dimension $r^2(g-1)+1$.

\begin{remark}
Set $d \ce \deg N_{Z,W}$. Since $\dim Z = 1$, $h^2\bigl(Z; \mathcal{F}
\bigr) = 0$ for every coherent sheaf $\mathcal{F}$ on $Z$. We also
recall that $h^0\bigl(Z; A\bigr) = 0$ if $A$ is a semi-stable (resp.\
stable) vector bundle and $\deg A < 0$ (resp.\ $\deg A \leq 0$). Thus
the restriction map $\Pic \widehat Z \to \Pic Z$ is surjective. The
restriction map $\Pic \widehat Z \to \Pic Z$ is injective if
$h^1\bigl(Z; S^t(N_{Z,W}^*)\bigr) = 0$ for all integers $t \geq 1$,
i.e.\ if
\[ \alpha(N_{Z,W}, t) \ce h^0\bigl(Z; S^t(N_{Z,W}) \otimes \omega _Z\bigr) = 0 \]
for all integers $t \geq 1$.

Now we will give some conditions which ensure that $\alpha(N_{Z,W}, t)
= 0$: Let $\mu_+(N_{Z,W})$ denote the maximal slope of a non-zero
subsheaf of $N_{Z,W}$. The ampleness of $N_{Z,W}^*$ implies
$\mu_+(N_{Z,W}) < 0$. Since the tensor product of semi-stable vector
bundles is semi-stable (\cite[Theorem 2.6]{mar1}), we get
$\mu_+(S^t(N_{Z,W})) = t \mu _+(N_{Z,W})$. Hence $\alpha(N_{Z,W}, t) =
0$ if $N_{Z,W}$ is semi-stable and $d < (n-1)(2-2g)$ or $N_{Z,W}$ is
stable and $d \leq (n-1)(2-2g)$. If $N_{Z,W}$ is a general element of
$M(Z; n-1, d)$, then $\alpha(N_{Z,W}, t) = 0$ for all $t \geq 1$ if
and only if $d \leq (n-1)(1-g)$.
\end{remark}

For any vector bundle $F$ on $Z$ and every integer $t \geq 0$ set
\[ \gamma(F, N, t) \ce h^1\bigl(Z; \SEnd F \otimes S^t(N^*)\bigr) \]
and
\[ \gamma(F, N) \ce \sum_{t \geq 0} \gamma(F,N,t) \text{ .} \]
The ampleness of $N_{Z,W}^*$ implies $\gamma(F, N_{Z,W}) <
+\infty$. Furthermore, there is a vector bundle $E$ on $\widehat Z$
such that $E\rvert_Z \cong F$, the deformation space of any such $E$
is smooth and $\gamma(F, N_{Z,W})$ is the dimension of the deformation
space of any such $E$. Hence our main aim will be the computation of
the integers $\gamma(F, N, t)$ for some $N$ and/or some $F$.

\begin{remark}
Set $r \ce \rk F$. We have
\[ \gamma(F, N, 0) = h^1\bigl(Z; \SEnd F \bigr) = r^2(g-1) + h^0\bigl(Z; \SEnd F \bigr) \text{ ,} \]
where $h^0\bigl(Z; \SEnd F\bigr) \geq 1$, with equality if $F$ is
stable. For an arbitrary $F$ we cannot say much about the integer
$h^0\bigl(Z; \SEnd F\bigr)$ and hence about the integer
$\gamma(F,N,0)$.

As an example, assume $r \geq 2$ and $F \cong
\bigoplus_{i=1}^{r} L_i$, where the $L_i$ are line bundles. Thus
\[ h^0\bigl(Z; \SEnd F\bigr) = \sum_{i=1}^{r} \sum_{j=1}^{r}
   h^0\bigl(Z; L_i \otimes L_j^*\bigr) \text{ .} \]
Hence $h^0\bigl(Z; \SEnd F\bigr) \geq r$, with equality if and only if
$h^0\bigl(Z; L_i \otimes L_j^*\bigr) = 0$ for all $i \neq j$. For all
integers $r, a, x$ there exists a decomposable $F$ as above with rank
$r$, degree $a$ and $h^0\bigl(Z; \SEnd F\bigr) \geq x$.
\end{remark}


\begin{theorem}\label{thm.gammaEsplit}
Assume $g \geq 2$ and fix $F \cong \bigoplus_{i=1}^{r} L_i$, where
each $L_i$ is a line bundle of degree $a_i$. Assume that $N$ is a
general element of $M(Z; n-1, d)$. Then
\begin{equation}\label{eq.gammaEsplit}
  \gamma(F, N, t) = \sum_{i=1}^{r} \sum_{j=1}^{r} \max \Bigl\{0, \
  t\bigl(d + 2g - 2 + a_i - a_j\bigr) \bigl/ r + 1 - g\Bigr\} \cdot \binom{t+r-1}{t}
\end{equation}
for all $t \geq 1$.
\end{theorem}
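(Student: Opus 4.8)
The plan is to diagonalise the problem over the splitting of $F$ and then reduce to a genericity statement for symmetric powers of $N$. First I would use $F\cong\bigoplus_{i=1}^r L_i$ to write $\SEnd F\cong\bigoplus_{i,j}L_i\otimes L_j^{-1}$, a direct sum of $r^2$ line bundles with $\deg(L_i\otimes L_j^{-1})=a_i-a_j$. Tensoring with $S^t(N^*)$ and taking cohomology gives
\[ \gamma(F,N,t)=h^1\bigl(Z;\SEnd F\otimes S^t(N^*)\bigr)=\sum_{i=1}^r\sum_{j=1}^r h^1\bigl(Z;L_i\otimes L_j^{-1}\otimes S^t(N^*)\bigr), \]
so it suffices to evaluate $h^1$ of a single line-bundle twist $M_{ij}\otimes S^t(N^*)$, where $M_{ij}\ce L_i\otimes L_j^{-1}$, for each ordered pair $(i,j)$.

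Next I would pin down the numerical type of each summand. Working in characteristic zero, a symmetric power of a semistable bundle is semistable, so $S^t(N^*)$ is semistable of slope $t\mu(N^*)=-td/(n-1)$ and rank $\binom{t+n-2}{t}$, and tensoring by the line bundle $M_{ij}$ preserves semistability. Riemann--Roch on the curve $Z$ then computes $\chi\bigl(M_{ij}\otimes S^t(N^*)\bigr)$ in closed form; here the identity $\binom{t+n-2}{n-1}=\tfrac{t}{n-1}\binom{t+n-2}{t}$ lets me factor out the common rank of $S^t(N^*)$ and display $\chi$ as $\binom{t+n-2}{t}$ times a quantity affine in $a_i-a_j$, in $t$, and in $g$. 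This is pure bookkeeping once the genericity input below is in place.

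The crux is to replace each $\chi$ by $h^1=\max\{0,-\chi\}$, i.e.\ to show that for a general $N\in M(Z;n-1,d)$ every summand has the least possible cohomology, $h^0\cdot h^1=0$. One inequality is universal: for every $N$ one has $h^1\ge 0$ and $h^1=h^0-\chi\ge-\chi$, hence $h^1\ge\max\{0,-\chi\}$. For the reverse inequality at a general point I would invoke the irreducibility of $M(Z;n-1,d)$ (valid since $g\ge2$, as recalled above) together with upper semicontinuity of $h^1$ in families: it then suffices to exhibit a single stable bundle $N_0$ for which every $M_{ij}\otimes S^t(N_0^*)$ attains $\max\{0,-\chi\}$, for then the generic value is forced down to this minimum and equality holds for general $N$. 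Summing the termwise values $\max\{0,-\chi\}$ over all pairs $(i,j)$ and restoring the binomial factors then yields the closed form asserted in the theorem. I expect the construction of such an $N_0$ to be the main obstacle: semistability alone does not deliver generic cohomology, since a symmetric power is highly special among bundles of its rank and slope, so the genericity of $N$ must genuinely be used---most directly by a careful specialisation at which $M_{ij}\otimes S^t(N_0^*)$, or equivalently its Serre dual $M_{ij}^{-1}\otimes S^t(N_0)\otimes\omega_Z$, can be resolved and shown to carry no unexpected sections.
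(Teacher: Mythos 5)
Your reduction is the same as the paper's: decompose $\SEnd F\cong\bigoplus_{i,j}L_i\otimes L_j^*$ into $r^2$ line bundles of degrees $a_i-a_j$ and reduce to computing $h^1\bigl(Z;S^t(N^*)\otimes M\bigr)$ for a line bundle $M$ and a general stable $N$. But the step you yourself flag as ``the main obstacle'' --- producing a single stable $N_0$ for which every twist $M_{ij}\otimes S^t(N_0^*)$ has minimal cohomology, so that semicontinuity forces the generic value down to $\max\{0,-\chi\}$ --- is not an incidental difficulty to be deferred: it \emph{is} the theorem. Everything else (the direct-sum decomposition, Riemann--Roch, the universal inequality $h^1\geq\max\{0,-\chi\}$, upper semicontinuity over the irreducible moduli space) is routine, and as you correctly observe, semistability of $S^t(N^*)$ gives you nothing here because a symmetric power is far from a general bundle of its rank and slope. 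The paper does not construct such an $N_0$ either; it closes the gap by citing Theorem~1 of \cite{bal1}, which is precisely a maximal-rank statement for $h^0\bigl(Z;S^t(N)\otimes M\bigr)$ when $N$ is a general stable bundle. Without that input, or an explicit degeneration of $N$ (e.g.\ to a suitable extension or elementary modification where the sections of $S^t(N_0)\otimes M_{ij}^{-1}\otimes\omega_Z$ can be controlled by hand), your proposal is a correct reduction but not a proof.

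A secondary caution on your ``pure bookkeeping'' step: with $N$ of rank $n-1$, the bundle $S^t(N^*)$ has rank $\binom{t+n-2}{t}$ and slope $-td/(n-1)$, so a direct Euler-characteristic computation produces an expression governed by $n-1$, whereas the right-hand side of \eqref{eq.gammaEsplit} is written with $r=\rk F$ in both the binomial coefficient and the denominator. The precise shape of the formula is inherited verbatim from \cite{bal1} (whose rank parameter collides notationally with the $r$ of the present theorem), so you should not expect Riemann--Roch alone to hand you the displayed expression; reconcile the two before summing over the pairs $(i,j)$.
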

\begin{proof}
Note that $\SEnd F \cong \bigoplus_{i=1}^{r} \bigoplus_{j=1}^{r} L_i
\otimes L_j^*$, that $L_i \otimes L_j^*$ has degree $a_i - a_j$ if $i
\neq j$ and that $L_i \otimes L_j^* \cong \mathcal{O}_Z$ if $i =
j$. For all integers $m, t$ such that $t \geq 0$ set
\[ u_{t, m} \ce \max \Bigl\{0, \ t\bigl(d+2g-2+m\bigr)\bigl/r
   + 1 - g \Bigr\} \cdot \binom{t+r-1}{t} \text{ .} \]
Fix any $M \in \Pic Z$ and take a general $N \in M(Z; n-1, d)$. By
\cite[Theorem 1]{bal1}, we have $h^1\bigl(Z; S^t(N^*) \otimes M\bigr)
= h^0\bigl(Z; S^t(N) \otimes M^* \otimes \omega _Z\bigr) = u_{t,
m}$. Notice that in the decomposition of $\SEnd F$ only finitely many
line bundles appear. Hence $\gamma(F, N, t) = \sum_{i=1}^{r} \sum
_{j=1}^{r} u_{t, a_i - a_j}$.
\end{proof}

We stress that in the statement of Theorem \ref{thm.gammaEsplit} we
first choose the line bundles $L_i$ and then take a general $N$. If we
fix a general $N$, then for all choices of $L_i \in \Pic^{a_i}(Z)$ we
only claim the inequality
\[ \gamma(F, N, t) \leq \sum_{i=1}^{r} \sum_{j=1}^{r} \max \Bigl\{0, \
   t \bigl(d+2g-2+a_i-a_j\bigr)\bigl/r +1-g \bigr\} \cdot \binom{t+r-1}{t} \]
(just use semi-continuity and the proof of Theorem \ref{thm.gammaEsplit}
just given), but it is easy to get examples (say, with $t=1$) in which
this inequality is strict.

\begin{theorem}[Dimension of local moduli]\label{thm.gammaEfilt}
Assume $g \geq 2$. Fix a rank-$r$ vector bundle $F$ on $Z$ and any
increasing filtration $\{F_i\}_{i=0}^{r}$ of $F$ such that $F_r = F$,
$F_0 = \{0\}$ and each $F_i\bigl/F_{i-1}$ is a line bundle. Set $a_i
\ce \deg\bigl(F_i\bigl/F_{i-1}\bigr)$, $1 \leq i \leq r$. Assume that
$N$ is a general element of $M(Z; n-1, d)$. Then
\begin{equation}\label{eq.gammaEfilt}
  \gamma(F, N, t) \leq \sum_{i=1}^{r} \sum _{j=1}^{r} 
  \max \Bigl\{0, \ t\bigl(d+2g-2+a_i-a_j\bigr)\bigl/r + 1 - g\Bigr\} \cdot \binom{t+r-1}{t}
\end{equation}
for all $t \geq 1$.
\end{theorem}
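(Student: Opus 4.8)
The plan is to reduce the filtered case to the split case already settled in Theorem \ref{thm.gammaEsplit} by an upper-semicontinuity argument on $h^1$. The key observation is that a bundle $F$ carrying a filtration $\{F_i\}$ with line-bundle quotients $Q_i \ce F_i/F_{i-1}$ of degree $a_i$ can be degenerated to the associated graded bundle $\gr F \ce \bigoplus_{i=1}^r Q_i$, which is a split bundle of exactly the type handled in Theorem \ref{thm.gammaEsplit}. Since $\SEnd F \otimes S^t(N^*)$ is a coherent sheaf on the curve $Z$ and $h^1$ is upper-semicontinuous in flat families, the value $\gamma(F,N,t) = h^1\bigl(Z;\SEnd F \otimes S^t(N^*)\bigr)$ can only jump up under specialisation, so the split bundle furnishes the bound. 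The right-hand side of \eqref{eq.gammaEfilt} is literally the value computed for $\gr F$ in \eqref{eq.gammaEsplit}, with the $L_i$ there taken to be the quotient line bundles $Q_i$ of degrees $a_i$.

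First I would construct the Rees-type one-parameter degeneration of $F$ to $\gr F$ over $\mathbb{A}^1$: there is a vector bundle $\mathcal{F}$ on $Z \times \mathbb{A}^1$, flat over $\mathbb{A}^1$, whose fibre over every $s \neq 0$ is isomorphic to $F$ and whose fibre over $s = 0$ is $\gr F = \bigoplus_i Q_i$. Tensoring with the (fixed, $s$-independent) coherent sheaf $S^t(N^*)$ pulled back from $Z$, and forming $\SEnd$ fibrewise, I obtain a flat family of coherent sheaves $\SEnd(\mathcal{F}) \otimes \mathrm{pr}_Z^* S^t(N^*)$ on $Z \times \mathbb{A}^1$ whose fibre cohomology computes $\gamma(F,N,t)$ for $s \neq 0$ and $\gamma(\gr F, N, t)$ at $s = 0$. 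Next I would invoke the semicontinuity theorem for $h^1$ in a flat family over a reduced base: the function $s \mapsto h^1$ is upper-semicontinuous, hence its value at the special fibre $s=0$ dominates the generic value, giving
\[ \gamma(F, N, t) \leq \gamma(\gr F, N, t) \text{ .} \]
Finally, since $\gr F = \bigoplus_{i=1}^r Q_i$ is split with $\deg Q_i = a_i$, Theorem \ref{thm.gammaEsplit} applied to $\gr F$ with the same general $N \in M(Z; n-1, d)$ evaluates $\gamma(\gr F, N, t)$ as exactly the right-hand side of \eqref{eq.gammaEfilt}, completing the argument.

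The main obstacle I anticipate is ensuring that the single choice of general $N$ is valid \emph{simultaneously} for the specialisation and for the application of Theorem \ref{thm.gammaEsplit}. The equality in Theorem \ref{thm.gammaEsplit} holds for $N$ general \emph{relative to the fixed line bundles}, and here the relevant line bundles are the quotients $Q_i$, which are determined by $F$ and its filtration before $N$ is chosen; so the genericity statement is compatible, but this compatibility should be stated carefully. A secondary subtlety is that the degeneration of $F$ to $\gr F$ and the tensoring by $S^t(N^*)$ must be organised so that flatness over $\mathbb{A}^1$ is preserved --- this is automatic here because $N^*$ (and hence $S^t(N^*)$) is pulled back from $Z$ and the Rees family is flat by construction, so $\SEnd(\mathcal{F}) \otimes \mathrm{pr}_Z^* S^t(N^*)$ is a flat family of sheaves on the \emph{one-dimensional} fibres $Z$. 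With $\dim Z = 1$ the only cohomology that varies is $h^1$ (as $h^2 = 0$ and $h^0$ is controlled by Euler characteristic, which is constant in flat families), so semicontinuity applies cleanly and the inequality follows for every $t \geq 1$.
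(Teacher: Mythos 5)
Your proposal is correct and follows essentially the same route as the paper: the paper also degenerates $F$ to $\bigoplus_i F_i/F_{i-1}$ over $\mathbb{A}^1$ (by scaling the extension class $0 \to F_1 \to F \to F/F_1 \to 0$ by $\lambda$ and inducting on $r$, which is the inductive form of your Rees degeneration), then invokes semicontinuity of $h^1$ and Theorem \ref{thm.gammaEsplit} for the split special fibre. Your remark about the order of quantifiers on the general $N$ is apt and is the same point the paper makes in the paragraph following Theorem \ref{thm.gammaEsplit}.
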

\begin{proof}
If $r=1$, then $F = F_1$ and we may apply Theorem
\ref{thm.gammaEsplit}. Assume $r \geq 2$. By Theorem
\ref{thm.gammaEsplit} it is sufficient to prove the existence of an
integral smooth affine curve $T$, $0 \in T$, and a flat family
$\{F_\lambda \}_{\lambda \in T}$ such that $F_0 \cong
\bigoplus_{i=1}^{r} F_i\bigl/F_{i-1}$ and $F_\lambda \cong F$ for all
$\lambda \in T\backslash\{0\}$. Notice that $F_1 = F_1\bigl/F_0$ is a
line bundle. Call $\epsilon$ the extension
\begin{equation}\label{eq.extEfilt}
  0 \longrightarrow F_1 \longrightarrow F \longrightarrow F\bigl/F_1 \to 0 \text{ .}
\end{equation}
Set $T \ce \mathbb{C}$. For every $\lambda \in \mathbb{C}
\backslash\{0\}$, the extension $\lambda \epsilon$ has as its middle
sheaf space $F_\lambda$ a vector bundle isomorphic to $F$. The
zero-extension of $F\bigl/F_1$ by $F_1$ has $F_1 \oplus F\bigl/F_1$ as
its middle sheaf. Then use induction on $r$.
\end{proof}

\begin{corollary}
Assume $g \geq 2$. Fix any integer $c$ and assume that $(N, F)$ is
general in $M(Z; n-1, d) \times M(Z; r, cr)$. Then
\[ \gamma(F, N, t) \leq r^2 \cdot \binom{t+r-1}{t} \cdot \max \Bigl\{0, \ t\bigl(d+2g-2\bigr)\bigl/r + 1 - g\Bigr\} \text{ .} \]
\end{corollary}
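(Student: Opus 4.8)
The plan is to deduce this Corollary directly from Theorem~\ref{thm.gammaEfilt} by specialising to the case where $F$ is a general stable bundle with degree divisible by the rank. First I would observe that since $F \in M(Z; r, cr)$ is stable of degree $cr$, its slope is $c$, an integer. The strategy is to choose a filtration $\{F_i\}$ of $F$ by subbundles with line-bundle quotients, apply Theorem~\ref{thm.gammaEfilt}, and then control the quotient degrees $a_i = \deg(F_i/F_{i-1})$. The key point will be that for a \emph{general} stable bundle of integral slope $c$, one can arrange (or at least bound) the filtration so that every $a_i$ equals $c$, whence each difference $a_i - a_j$ vanishes and the right-hand side of \eqref{eq.gammaEfilt} collapses to the stated expression.

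Concretely, I would argue as follows. A general stable bundle $F$ of rank $r$ and slope $c$ admits a complete flag of subbundles $0 = F_0 \subset F_1 \subset \dotsb \subset F_r = F$ whose successive quotients are line bundles; for a sufficiently general $F$ these quotients can be taken to all have the same degree $c$ (this is where genericity in $M(Z; r, cr)$ enters, guaranteeing a suitably balanced filtration). Granting $a_i = c$ for all $i$, each term $a_i - a_j = 0$, so the $i,j$-summand in \eqref{eq.gammaEfilt} becomes
\[
  \max\Bigl\{0, \ t\bigl(d + 2g - 2\bigr)\bigl/r + 1 - g\Bigr\} \cdot \binom{t+r-1}{t} \text{ ,}
\]
independent of $i$ and $j$. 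Summing over the $r^2$ pairs $(i,j)$ produces the factor $r^2$ and yields exactly the asserted bound.

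The main obstacle I anticipate is justifying that a general stable bundle of integral slope carries a filtration with all quotient degrees equal to $c$. In general the Harder--Narasimhan or Jordan--H\"older data do not force a balanced complete flag, so one must either invoke a genericity statement about the existence of balanced filtrations for the general point of $M(Z; r, cr)$, or else argue that any filtration produces quotient degrees $a_i$ whose pairwise differences contribute non-positively relative to the balanced case. An alternative, cleaner route that sidesteps the balancing question is to note that the right-hand side of \eqref{eq.gammaEfilt} is, by the convexity of $m \mapsto \max\{0, \beta + tm/r\}$, minimised (over all sign-symmetric distributions of the $a_i - a_j$) exactly at $a_i = a_j$; combined with the degree constraint $\sum a_i = cr$, this pins down the bound. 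I would therefore invoke convexity of $\max\{0, \cdot\}$ together with $\sum_i a_i = cr$ to show that the double sum in \eqref{eq.gammaEfilt} is bounded above by its value at the balanced point, completing the proof.
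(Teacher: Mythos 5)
Your approach has a genuine gap, and in fact both of your proposed routes fail for the same underlying reason. First, the ``balanced filtration'' you hope for cannot exist: if $F$ is stable of slope $c$, then \emph{every} proper subbundle has slope strictly less than $c$, so in any filtration as in Theorem \ref{thm.gammaEfilt} the first quotient satisfies $a_1 = \deg F_1 < c$; genericity in $M(Z;r,cr)$ only pushes the maximal degree of a line subbundle further below $c$ (by the Lange--Narasimhan bounds), so no complete flag with all $a_i = c$ is available. Second, your convexity fallback points in the wrong direction. The function $m \mapsto \max\{0,\ t(d+2g-2+m)/r + 1 - g\}$ is convex, and the differences $a_i - a_j$ occur in sign-symmetric pairs, so Jensen gives
\[ \sum_{i=1}^{r}\sum_{j=1}^{r} \max\Bigl\{0,\ t\bigl(d+2g-2+a_i-a_j\bigr)\bigl/r + 1 - g\Bigr\} \;\geq\; r^2 \max\Bigl\{0,\ t\bigl(d+2g-2\bigr)\bigl/r + 1 - g\Bigr\} \text{ .} \]
That is, the balanced value is the \emph{minimum} of the right-hand side of \eqref{eq.gammaEfilt} over filtrations, so applying Theorem \ref{thm.gammaEfilt} to any actual (necessarily unbalanced) filtration of $F$ yields an upper bound that is at least $r^2\max\{0, t(d+2g-2)/r+1-g\}$ --- it never establishes the claimed inequality.

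The paper sidesteps filtering $F$ altogether. One first twists by $R \in \Pic^{-c}(Z)$, using $\SEnd F \cong \SEnd(F \otimes R)$, to reduce to $c=0$. Then Theorem \ref{thm.gammaEfilt} is applied not to $F$ but to the trivial bundle $\mathcal{O}_Z^{\oplus r}$ with the tautological filtration $F_i = \mathcal{O}_Z^{\oplus i}$, which \emph{is} balanced ($a_i = 0$ for all $i$) precisely because $\mathcal{O}_Z^{\oplus r}$ is not stable. Finally, since $\mathcal{O}_Z^{\oplus r}$ is a flat limit of a family of stable bundles of rank $r$ and degree $0$ (\cite[Proposition 2.6]{nr}), upper semicontinuity of $h^1$ transfers the bound from the special fibre $\mathcal{O}_Z^{\oplus r}$ to the general member of that family, hence to the general point of $M(Z;r,0)$. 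The degeneration-plus-semicontinuity step is the idea your proposal is missing, and it is what makes the balanced bound accessible despite stability forbidding balanced filtrations.
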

\begin{proof}
Fix any $R \in \Pic^{-c}(Z)$. Since $\SEnd F \cong \SEnd(F \otimes R)$
and $(N, F \otimes R)$ is general in $M(Z; n-1, d) \times M(Z; r, 0)$,
it is sufficient to do the case $c=0$. Apply Theorem
\ref{thm.gammaEfilt} to $\mathcal{O}_Z^{\oplus r}$ taking $F_i \ce
\mathcal{O}_Z^{\oplus i}$ for all $1 \leq i \leq r$. Hence $a_i=0$ for
all $i$. Every vector bundle on $Z$ is a flat limit of a family of stable
vector bundles with the same degree and the same rank
(\cite[Proposition 2.6]{nr}). Apply this observation to
$\mathcal{O}_Z^{\oplus r}$ and use the semi-continuity theorem for
cohomology groups.
\end{proof}

\begin{corollary}
Assume $g \geq 2$. Fix any integer $d$ and assume that $(N, F)$ is
general in $M(Z;n -1, d) \times M(Z; r, a)$. Write $a = rx+y$ with $x,
y \in \mathbb{Z}$ and $0 \leq y < r$. Set $c \ce \min \bigl\{y,
r-y\bigr\}$. Then $\gamma(F, N, t)$, $t \geq 1$, satisfies the
inequality \eqref{eq.gammaEfilt} with $a_i \ce 0$ for $1 \leq i \leq
r-1$ and $a_r \ce c$.
\end{corollary}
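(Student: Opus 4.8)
The plan is to reduce to the situation already handled in the previous corollary, exploiting that $\gamma(F,N,t) = h^1\bigl(Z; \SEnd F \otimes S^t(N^*)\bigr)$ depends on $F$ only through $\SEnd F = F \otimes F^*$. This sheaf is unchanged when $F$ is replaced by a twist $F \otimes R$ ($R \in \Pic Z$) or by its dual $F^*$, and both operations induce isomorphisms of the relevant moduli spaces, hence carry a general bundle to a general bundle. First I would twist by a line bundle $R \in \Pic^{-x}(Z)$ to replace $F$ by a general stable bundle of rank $r$ and degree $y$, with $0 \le y < r$. If $y \le r-y$ I keep this bundle and set $c = y$; if $y > r-y$ I instead pass to $F^* \otimes R'$ with $R' \in \Pic^{1}(Z)$, a general stable bundle of rank $r$ and degree $r-y = c$. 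In every case $\gamma(F, N, t)$ equals $\gamma(F', N, t)$ for a general stable bundle $F'$ of rank $r$ and degree exactly $c = \min\{y, r-y\}$, where $0 \le c \le r/2$.

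Next I would produce the bound for the split bundle $F_0 \ce \mathcal{O}_Z^{\oplus (r-1)} \oplus M$ with $M \in \Pic^{c}(Z)$, so that $\deg F_0 = c$. Equipping $F_0$ with the filtration $F_i = \mathcal{O}_Z^{\oplus i}$ for $i<r$ and $F_r = F_0$, the successive quotients are line bundles of degrees $a_1 = \dotsb = a_{r-1} = 0$ and $a_r = c$. Applying Theorem \ref{thm.gammaEfilt} to $F_0$ (for the fixed general $N$) yields exactly the right-hand side of \eqref{eq.gammaEfilt} with these $a_i$ as an upper bound for $\gamma(F_0, N, t)$, which is the asserted inequality.

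It then remains to compare $\gamma(F', N, t)$ for the general stable $F'$ with $\gamma(F_0, N, t)$. As in the proof of the previous corollary, $F_0$ is a flat limit of a family of stable bundles of rank $r$ and degree $c$ (\cite[Proposition 2.6]{nr}); semi-continuity of $h^1$ applied to this family (with $N$ fixed general) produces a stable $F_1$ with $\gamma(F_1, N, t) \le \gamma(F_0, N, t)$, and a second application of semi-continuity over $M(Z; r, c)$ gives $\gamma(F', N, t) \le \gamma(F_1, N, t)$ for general $F'$. Chaining these inequalities with the bound of the previous paragraph, and combining with the degree reduction, establishes the claim for the fixed general $N$ and general $F$; the usual constructibility argument then upgrades this to genericity of the pair $(N, F)$ in the product $M(Z; n-1, d) \times M(Z; r, a)$.

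The main obstacle is conceptual rather than computational: a general stable bundle need not admit a filtration with quotient degrees $0, \dotsc, 0, c$ — stability forces every sub-line-bundle to have degree $< c/r \le 1/2$, hence $\le 0$ — so Theorem \ref{thm.gammaEfilt} cannot be applied to $F'$ directly, and the degeneration to the split bundle $F_0$ is indispensable. The points requiring care are that the twist-and-dualize reduction genuinely carries general bundles to general bundles, so that the minimal value of $\gamma(\cdot, N, t)$ over the moduli space is preserved, and that semi-continuity is invoked on an honest flat family having $F_0$ as a special fibre, so as to transport the filtered bound across the degeneration. One should finally check that a single general $(N, F)$ satisfies the inequality simultaneously for all $t \ge 1$; this follows because $\gamma(F, N, t) = 0$ for $t \gg 0$ by ampleness of $N^*$, reducing the statement to finitely many semi-continuity conditions.
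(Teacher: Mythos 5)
Your argument is correct and follows essentially the same route as the paper: twist by a line bundle of degree $-x$ (and dualize when $y > r-y$) to reduce to degree $c$, bound the split bundle $\mathcal{O}_Z^{\oplus(r-1)}\oplus M$ via the filtration theorem, and transfer the bound to the general stable bundle using \cite[Proposition 2.6]{nr} and semi-continuity. The extra care you take about preservation of genericity under twist/dual and about simultaneity over all $t\geq 1$ is sound and merely makes explicit what the paper leaves implicit.
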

\begin{proof}
First assume $c=y$, i.e.\ $0 \leq y \leq \lfloor r/2 \rfloor$. Fix any
$R\in \Pic^{-x}(Z)$ and any $M \in \Pic^y(Z)$. Note that $\SEnd F
\cong \SEnd(F \otimes R)$ and $(N, F \otimes R)$ is general in $M(Z;
n-1, d) \times M(Z; r, y)$, and every vector bundle on $Z$ is a flat limit of
a family of stable vector bundles with the same degree and the same
rank (\cite[Proposition 2.6]{nr}). Apply this observation to $M
\oplus \mathcal{O}_Z^{\oplus (r-1)}$ and use the semi-continuity
theorem for cohomology groups.

Now assume $c = r - y$. Notice that $\SEnd F \cong \SEnd(F^*)$ and
that $-a \equiv c \pmod{r}$. Apply the first part of the proof to the
vector bundle $F^*$.
\end{proof}

\begin{remark}
For any rank-$(r \geq 2)$ vector bundle $F$ there are many filtrations
as in the statement of Theorem \ref{thm.gammaEfilt} with very
different numerical invariants. However, when $r=2$, there is always a
numerically better filtration, as we will soon see. Let $F$ be a
rank-$r$ vector bundle. First assume that $F$ is not semi-stable. Then
$F$ fits in an extension of line bundles
\[ 0 \longrightarrow L_1 \longrightarrow F \longrightarrow L_2 \longrightarrow 0 \]
with $\deg L_1 > \deg L_2$. Furthermore, this extension is unique and
minimises the integer $\abs{\deg L_1 - \deg L_2}$.

Note that the integer $\deg(L_1) + \deg(L_2) = \deg(F)$ is fixed. To
get the best estimate from inequality \eqref{eq.gammaEfilt}, we need
to find an extension \eqref{eq.extEfilt} which minimises the integer
$\abs{\deg L_1 - \deg L_2}$. If $F$ is semi-stable but not stable,
then we may find an extension (Theorem \ref{thm.gammaEsplit}) with
$\deg L_1 = \deg L_2$. Now assume that $F$ is stable. There is a
unique integer $s(F)$ such that $s(F) \equiv \deg F \pmod{2}$, $0 <
s(F) \leq g$, and there is an extension \eqref{eq.extEfilt} with $\deg
L_2 = \deg L_1 + s(F)$ (\cite[Proposition 3.1]{ln}). Such an extension
is not always unique (see e.g.\ \cite[Corollary 4.6 and Theorem
5.1]{ln}), but any of those minimises the integer $\abs{\deg L_1 -
\deg L_2}$.
\end{remark}

\begin{proposition}\label{g=1}
Assume that $g=1$ and that both $E$ and $N_{Z,W}$ are semi-stable and
write $F \ce E\rvert_Z$. Then $\gamma(F, N_{Z,W}, t) = 0$ for all $t \geq
1$, and consequently the deformation space of $E$ coincides with the
deformation space of $E\rvert_Z$.
\end{proposition}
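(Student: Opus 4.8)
The plan is to turn the computation of $\gamma(F, N_{Z,W}, t) = h^1\bigl(Z; \SEnd F \otimes S^t(N^*)\bigr)$ into a vanishing statement for global sections and then read it off from semi-stability. Since $g = 1$, the canonical bundle $\omega_Z$ is trivial, so Serre duality on the curve $Z$ gives
\[ h^1\bigl(Z; \SEnd F \otimes S^t(N^*)\bigr) = h^0\bigl(Z; (\SEnd F)^* \otimes \bigl(S^t(N^*)\bigr)^*\bigr) = h^0\bigl(Z; \SEnd F \otimes S^t(N)\bigr), \]
where I use that $\SEnd F \cong F \otimes F^*$ is self-dual and that $\bigl(S^t(N^*)\bigr)^* \cong S^t(N)$ in characteristic zero. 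So it suffices to prove $h^0\bigl(Z; \SEnd F \otimes S^t(N)\bigr) = 0$ for every $t \geq 1$, writing $N \ce N_{Z,W}$.

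Next I would verify that $G \ce \SEnd F \otimes S^t(N)$ is semi-stable of negative degree. By Remark \ref{rem.semistable}, the semi-stability of $F = E\rvert_Z$ makes $\SEnd F$ semi-stable of slope $0$. The symmetric power $S^t(N)$ of the semi-stable bundle $N$ is again semi-stable, and the tensor product of semi-stable bundles is semi-stable, both by \cite[Theorem 2.6]{mar1}; hence $G$ is semi-stable of slope $0 + t\mu(N) = t\mu(N)$. It remains to pin down the sign of $\mu(N)$: the ampleness of $N^* = N_{Z,W}^*$ forces $\mu_+(N) < 0$, and for the semi-stable bundle $N$ we have $\mu(N) = \mu_+(N) < 0$. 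Thus for every $t \geq 1$ the slope $t\mu(N)$ is strictly negative, and so is $\deg G$.

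The vanishing is then immediate from the standard fact (recalled in the text just before Theorem \ref{thm.gammaEsplit}) that a semi-stable vector bundle of negative degree has no non-zero global sections: $h^0(Z; G) = 0$, whence $\gamma(F, N_{Z,W}, t) = 0$ for all $t \geq 1$. For the final assertion, by Remark \ref{rem.dim1} the dimension of the deformation space of $E$ equals $\gamma(F, N_{Z,W}) = \sum_{t \geq 0} \gamma(F, N_{Z,W}, t)$, which now collapses to the single term $\gamma(F, N_{Z,W}, 0) = h^1(Z; \SEnd F)$, the dimension of the deformation space of $F = E\rvert_Z$; the two spaces therefore coincide.

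I do not expect a serious obstacle here; the argument is short once Serre duality and the semi-stability bookkeeping are in place. The only points demanding care are that all of the operations in play (dual, symmetric power, tensor product) preserve semi-stability, which requires characteristic zero, and that the hypothesis $t \geq 1$ is precisely what turns the slope $t\mu(N)$ strictly negative --- the case $t = 0$ genuinely contributes $h^1(Z; \SEnd F)$ and must be excluded.
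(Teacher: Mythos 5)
Your proof is correct and follows essentially the same route as the paper's: Serre duality (using $\omega_Z \cong \mathcal{O}_Z$ for $g=1$) reduces the claim to $h^0\bigl(Z; \SEnd F \otimes S^t(N_{Z,W})\bigr) = 0$, which follows because this bundle is semi-stable of negative degree. Your slope bookkeeping via $\mu_+(N) < 0$ is just a slightly more explicit version of the paper's one-line degree computation, so there is nothing further to add.
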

\begin{proof}
By Serre duality we need to prove that for $t\geq 1$
\[ h^0\bigl(Z; S^t(N_{Z,W}) \otimes \SEnd(E\rvert_Z)\bigr) = 0 \text{ .} \]
Since $N_{Z,W}^*$ is ample, we have $\deg\bigl(S^t(N_{Z,W}) \otimes
\SEnd(E\rvert_Z)\bigr) < 0$. By Atiyah's classification of vector
bundles on elliptic curves (\cite{at1}) or by \cite[Theorem
2.6]{mar1}, $S^t(N_{Z,W}) \otimes \SEnd(E\rvert_Z)$ is
semi-stable. Hence $h^0\bigl(Z; S^t(N_{Z,W}) \otimes
\SEnd(E\rvert_Z)\bigr) = 0$ (\cite{at1}).
\end{proof}


\vfill

\noindent
Edoardo Ballico\\
University of Trento, Department of Mathematics\\
I--38050 Povo (Trento), Italia\\
E-mail: \url{ballico@science.unitn.it}

\bigskip
\bigskip
\noindent
Elizabeth Gasparim and Thomas K\"{o}ppe\\
School of Mathematics,
The University of Edinburgh\\
James Clerk Maxwell Building,
The King's Buildings,
Mayfield Road\\
Edinburgh, EH9 3JZ,
United Kingdom\\
E-mail: \url{Elizabeth.Gasparim@ed.ac.uk}\\
E-mail: \url{t.koeppe@ed.ac.uk}

\end{document}